\documentclass{amsart}
\usepackage{amsmath,amsthm,amsfonts,amssymb,latexsym,mathrsfs,graphicx}
\usepackage{pgf,tikz}
\usepackage{mathrsfs}
\usetikzlibrary{arrows}
\usepackage{tabularx}
\usepackage{hyperref}

\usepackage{enumerate}
\usepackage[shortlabels]{enumitem}
\usepackage{color}

\headheight=7pt
\textheight=574pt
\textwidth=432pt
\topmargin=14pt
\oddsidemargin=18pt
\evensidemargin=18pt
\usepackage{pstricks-add}
\usepackage{graphicx}
\textwidth160mm
\oddsidemargin5mm
\evensidemargin5mm
\usepackage{pgf,tikz,pgfplots}
\usepackage{mathrsfs}
\usetikzlibrary{arrows}
\definecolor{ududff}{rgb}{0.30196078431372547,0.30196078431372547,1}

\headheight=5pt \textheight=600pt \textwidth=450pt \topmargin=14pt 
\oddsidemargin=11pt\evensidemargin=14pt

\newtheorem{theorem}{Theorem}[section]

\newtheorem{lemma}[theorem]{Lemma}

\theoremstyle{definition}

\newtheorem{rem}[theorem]{Remark}

\newtheorem*{ThmA}{Theorem A}

\newenvironment{enumeratei}{\begin{enumerate}[\upshape (a)]}
    {\end{enumerate}}

\def\irr#1{{\rm Irr}(#1)}
\def\cent#1#2{{\bf C}_{#1}(#2)}

\def\aut#1{{\rm Aut}(#1)}

\def\irr#1{{\rm Irr}(#1)}

\def\cent#1#2{{\bf C}_{#1}(#2)}

\def\norm#1#2{{\bf N}_{#1}(#2)}

\def\aut#1{{\rm Aut}(#1)}

\def\sym#1{{\rm Sym}(#1)}

\def\SL#1{{\rm SL}_{2}(#1)}
\def\PSL#1{{\rm PSL}_{2}(#1)}

\def\E#1{{\bf E}(#1)}

\def\irr#1{{\rm Irr}(#1)}

\def\cent#1#2{{\bf C}_{#1}(#2)}

\def\ker#1{{\rm ker}(#1)}
\def\norm#1#2{{\bf N}_{#1}(#2)}

\mathchardef\coso="2023

\begin{document}

\title[Element orders and codegrees of characters in non-solvable groups]{Element orders and codegrees of characters\\ in non-solvable groups}

\author[Z. Akhlaghi et al.]{Zeinab Akhlaghi}
\address{Zeinab Akhlaghi, Faculty of Math. and Computer Sci., \newline Amirkabir University of Technology (Tehran Polytechnic), 15914 Tehran, Iran.\newline
School of Mathematics,
Institute for Research in Fundamental Science(IPM)
P.O. Box:19395-5746, Tehran, Iran.}
\email{z\_akhlaghi@aut.ac.ir}


\author[]{Emanuele Pacifici}
\address{Emanuele Pacifici, Dipartimento di Matematica e Informatica U. Dini,\newline
Universit\`a degli Studi di Firenze, viale Morgagni 67/a,
50134 Firenze, Italy.}
\email{emanuele.pacifici@unifi.it}

\author[]{Lucia Sanus}
\address{Lucia Sanus, Departament de Matem\`atiques, Facultat de
Matem\`atiques, \newline
Universitat de Val\`encia,
46100 Burjassot, Val\`encia, Spain.}
\email{lucia.sanus@uv.es}

\thanks{The first author is supported by a grant from IPM (No. 1402200112), the second author is partially supported by INdAM-GNSAGA, and the third author is partially supported by the
Spanish Ministerio de Ciencia e Innovaci\'on
(Grant PID2019-103854GB-I00 funded by MCIN/AEI/10.13039/501100011033) and by Generalitat Valenciana CIAICO/2021/163}

\keywords{Finite Groups; Character codegrees.}
\subjclass[2020]{20C15}

\begin{abstract} 
 Given a finite group $G$ and an irreducible complex character $\chi$ of $G$, the \emph{codegree} of $\chi$ is defined as the integer \({\rm cod}(\chi)=|G:\ker\chi|/\chi(1)\). It was conjectured by G. Qian in \cite{Q} that, for every element $g$ of $G$, there exists an irreducible character \(\chi\) of $G$ such that \({\rm cod}(\chi)\) is a multiple of the order of $g$; the conjecture has been verified under the assumption that $G$ is solvable (\cite{Q}) or almost-simple (\cite{M}). In this paper, we prove that Qian's conjecture is true for every finite group whose Fitting subgroup is trivial, and we show that the analysis of the full conjecture can be reduced to groups having a solvable socle.
\end{abstract}

\maketitle

\section*{Introduction}
Let $G$ be a finite group, and let \(\irr G\) denote the set of the irreducible complex characters of $G$; given a character \(\chi\) in \(\irr G\), the \emph{codegree} of \(\chi\) is defined (following \cite{QWW}) as the integer \[{\rm {cod}}(\chi)=\dfrac{|G:\ker\chi|}{\chi(1)}.\] 

An interesting problem related to character codegrees was introduced by G. Qian in \cite{Q0}, and then formulated by the same author as a conjecture in \cite{Q}: it is asked whether, for every element $g$ of $G$, there exists $\chi\in\irr G$ such that \({\rm {cod}}(\chi)\) is divisible by the order of $g$. This conjecture is proved to be true in \cite{Q} under the assumption that $G$ is solvable.

A ``modular version" of the above conjecture is also considered, and proved true for finite solvable groups, by X. Chen and G. Navarro in \cite{CN}. As for non-solvable groups, Qian's conjecture has been verified by E. Giannelli for finite symmetric and alternating groups (\cite{G}); furthermore, S.Y. Madanha proves in \cite{M} that the conjecture is true for finite almost-simple groups as well.

The present note is a contribution in this framework. The following main result extends the validity of Qian's conjecture to finite groups whose Fitting subgroup is trivial.

\begin{ThmA} Let $G$ be a finite group whose Fitting subgroup is trivial, and let $g$ be an element of $G$. Then there exists \(\chi\in\irr G\) such that \({\rm cod}(\chi)\) is a multiple of the order of $g$.
\end{ThmA}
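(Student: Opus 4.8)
The plan is to turn the constraint $\fit G=1$ into explicit structure and then exhibit a single character. Since $\fit G=1$, the socle $N:={\rm soc}(G)$ coincides with the layer $\lay G$, so $N=S_1\times\cdots\times S_n$ is a direct product of non-abelian simple groups (the components of $G$) and $\cent G N=1$, whence $G$ embeds into $\aut N$. For the given $g$ write $m:=o(gN)$ for the order of $g$ modulo $N$ and $g_0:=g^{m}\in N$; then $o(g)=m\cdot o(g_0)$, and decomposing $g_0=(y_1,\dots,y_n)$ along $N$ one has $o(g_0)={\rm lcm}_j\,o(y_j)$ with each $o(y_j)$ dividing $|S_j|$. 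The goal is a faithful $\hat\theta\in\irr G$ with ${\rm cod}(\hat\theta)=|G:N|\cdot\prod_j {\rm cod}(\eta_j)$ for suitable component characters $\eta_j$, and then to check $o(g)\mid{\rm cod}(\hat\theta)$ prime by prime.

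$G$ permutes $\{S_1,\dots,S_n\}$; choose one component $S$ in each $G$-orbit and let $A$ be the image of ${\rm Stab}_G(S)$ in $\aut S$, an almost simple group with socle $S$. The heart of the argument is to select, for each such $S$, a character $\eta\in\irr S$ that is (i) nontrivial, hence faithful; (ii) $A$-invariant; (iii) extendible to ${\rm Stab}_G(S)$; and (iv) of degree whose $p$-part is small enough, for every prime $p$ dividing some $o(y_j)$ with $S_j$ in the orbit, that $o(y_j)\mid{\rm cod}(\eta)=|S|/\eta(1)$ (ideally $\eta(1)$ coprime to all such $p$, which is conceivable because $o(y_j)\mid|S_j|=|S|$). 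The known validity of the conjecture for almost simple groups (\cite{M}) supplies the codegree requirement at the level of the component, while information on the action of $\out S$ on $\irr S$ together with extendibility results to $\aut S$ take care of (ii)--(iii); the Steinberg character for groups of Lie type, and other automorphism-stable characters, are the natural candidates.

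Transporting the chosen $\eta$'s across the $G$-orbits yields a $G$-invariant $\theta=\bigotimes_j\eta_j\in\irr N$; condition (iii) and the extension criterion for characters of direct factors permuted by $G$ produce an extension $\hat\theta\in\irr G$ of $\theta$. As every $\eta_j$ is faithful, $\ker{\hat\theta}\cap N=1$, so $\ker{\hat\theta}\le\cent G N=1$ and $\hat\theta$ is faithful; hence ${\rm cod}(\hat\theta)=|G|/\theta(1)=|G:N|\cdot\prod_j{\rm cod}(\eta_j)$. It then remains to verify $o(g)=m\cdot o(g_0)\mid{\rm cod}(\hat\theta)$, which is a $p$-adic bookkeeping: $m\mid|G:N|$ always; primes $p$ not dividing $|N|$ are absorbed by $|G:N|$; and for $p$ dividing $|N|$ one compares $v_p(|G:N|)+\sum_j v_p({\rm cod}(\eta_j))$ with $v_p(m)+\max_j v_p(o(y_j))$, using (iv) in the orbit where the maximum is attained — and using crucially that whenever $G$ induces a noninner automorphism on a component (so the invariance condition (ii) genuinely restricts the admissible $\eta$), $|G:N|$ is correspondingly divisible by a prime of $\out S$, which provides exactly the missing room.

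The main obstacle is the existence statement behind (ii)--(iv): for each non-abelian simple $S$, each almost simple overgroup $A$ of ${\rm Inn}(S)$, and each admissible element order $d$ of $S$, there must be a nontrivial $A$-invariant character of $S$ that extends to $A$ and has degree of small $p$-part at the primes $p\mid d$. This is a classification-based verification; the delicate cases are the low-rank groups of Lie type with field automorphisms (where elements of $G$ can have order divisible by primes dividing neither $|S|$ nor any component codegree — such primes being carried by $|G:N|$ alone — and where invariance and extendibility cut the supply of characters most severely), together with a careful treatment of the residual cases and of the precise extension criterion used to lift $\theta$ from $N$ up to $G$.
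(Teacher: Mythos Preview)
Your overall architecture---work on the socle $N$, choose component characters, extend to $G$, show faithfulness, then bookkeep---matches the paper's. The gap is in the existence statement you defer to ``classification-based verification'': it is \emph{false} as stated. Take $S\cong\PSL{3^f}$ with $f\geq 3$ odd and $A=\aut S$. By the known description of $\irr S$ and the action of $\out S$ (cf.\ the paper's Remark on this case), the unique non-principal $\alpha\in\irr S$ extending to $\aut S$ is the Steinberg character, and ${\rm cod}(\alpha)=|S|/3^f=(3^{2f}-1)/2$ is coprime to $3$. Hence condition~(iv) fails for any $y_j$ of order $3$. Your fallback, that $|G{:}N|$ supplies the ``missing room'' via primes of $\out S$, does not rescue this: for $G=\aut S$ with $f=5$ one has $|G{:}N|=2f=10$, and no factor of $3$ appears anywhere in $|G{:}N|\cdot{\rm cod}(\eta)$, so the faithful extension $\hat\theta$ has codegree not divisible by $3$ while $G$ contains elements of order $3$.

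The paper's remedy is precisely to abandon the demand that $\theta$ be $G$-invariant. For the ${\rm PSL}_2(3^f)$ components it places, on a $\langle g\rangle$-orbit of length $h$, characters of degree $(3^f-1)/2$---which do \emph{not} extend to $\aut S$, only to the index-$2$ subgroup $S\langle\phi\rangle$. A Glauberman-type coprime argument shows the resulting $\lambda\in\irr M$ can be chosen so that the odd part of $\langle g\rangle$ lies in $I=I_G(\lambda)$, forcing $|\langle g\rangle:\langle g\rangle\cap I|$ to divide $2^h$; one then takes $\chi=\psi^G$ induced from an extension $\psi$ of $\lambda$ to $I$. The lost factor $2^h$ in passing from $|G|$ to $|I|$ is exactly recovered because the degrees $(3^f-1)/2$ contribute a surplus $2^h$ to $|M|/\lambda(1)$. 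So the essential idea you are missing is that one must sometimes induce from a proper inertia subgroup, and the whole proof hinges on controlling $|\langle g\rangle:\langle g\rangle\cap I|$ and balancing it against the gain in $|M|/\lambda(1)$.
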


As a further step towards a possible proof of Qian's conjecture in full generality, we will also see that a minimal counterexample would be a group whose minimal normal subgroups are all abelian (Remark~\ref{remark}). 

In the following discussion we will freely use basic facts concerning character theory, for which we refer to \cite{Is}; also, every group will be tacitly assumed to be a finite group.

\section{Preliminaries}

We start by recalling some standard facts and establishing some notation in Remark~\ref{wreath}.

\begin{rem}\label{wreath}

Let $G$ be a group, and assume that $G$ has a unique minimal normal subgroup $M$; assume also that $M$ is non-solvable, thus $M=S_1\times\cdots\times S_n$ where the $S_i$ are pairwise isomorphic non-abelian simple groups.

Let \(\Omega=\{S_1,\ldots,S_n\}\), \(N=\norm G{S_1}\), and let \(T=\{t_1=1,\ldots, t_n\}\) be a right transversal for \(N\) in \(G\). The (transitive) action of $G$ by conjugation on \(\Omega\), i.e. the action of $G$ by right multiplication on the set \(\{Nt_i\mid i\in\{1,\ldots,n\}\}\), defines a homomorphism \(g\mapsto \sigma_g\) from \(G\) to \(\sym{\Omega}\cong\sym n\); moreover, for \(g\in G\) and \(i\in\{1,\ldots, n\}\), the element \(g_i=t_igt_{\sigma_g(i)}^{-1}\) lies in \(N\). 

Considering the factor group \(\overline{N}=N/\cent G{S_1}\) and adopting the bar convention, we see that \[g\mapsto(\overline{g_1},\ldots,\overline{g_n})\sigma_g\] defines an injective homomorphism from $G$ to the wreath product \(\Gamma=\aut{S_1}\wr\sym{n}\): this homomorphism is in fact the composition map of the injective homomorphism
$g \mapsto (g_1, \ldots, g_n)\sigma_g$ (see \cite[13.3]{CR})
with the natural homomorphism from $N\wr\sym{n}$ onto
$\overline{N}\wr\sym{n}$ (of course, here we are regarding $\overline N$ as a subgroup of \(\aut{S_1}\)), and the injectivity of this composition map is guaranteed by the fact that the normal core of \(\cent G{S_1}\) in $G$ is $\cent G M=1$. 

Identifying $G$ with a subgroup of \(\Gamma\), if \(\alpha_1\) is an irreducible character of \(S_1\) (or, more generally, of a subgroup \(X_1\) of \(\aut{S_1}\)) we will say that \(\alpha_1^{t_i}\) is the character of \(S_i\) (of \(X_1^{t_i}\)) corresponding to \(\alpha_1\).
\end{rem}

\smallskip
Our proof of Theorem~A relies on Lemma~\ref{monolithic}, concerning monolithic groups with a non-solvable socle; as a relevant preliminary ingredient, we gather some properties of characters of non-abelian simple groups in Lemma~\ref{simple} and Lemma~\ref{simple2}.

\begin{table}[h]
  \caption{Classical groups of Lie type in characteristic \(p\in\{2,3\}\).}
  \centering
  \resizebox{\textwidth}{!}{
    \begin{tabular}{cccc}
      \hline
      {Isomorphism type} & order & $\alpha(1)$ & Exponent of a Sylow $p$-subgroup\\
      \hline
      \\ \\
      \begin{tabular}{c}$A_1(q) \simeq \text{SL}_2(q),\; q = 2^{2f+1}$ \end{tabular} & $q (q^2-1)$ & $q-1$ & $2$ \\
      \\ \\
      \begin{tabular}{c}$A_1(q) \simeq \text{PSL}_2(q),\;q = p^{2f}$ \end{tabular} & $\dfrac{q (q^2-1)}{(2,q-1)}$ & {$q+1$} & $2,3$ \\
      \\ \\
      $A_2(q) \simeq \text{PSL}_3(q)$ & $\dfrac{q^3 (q^2-1)(q^3-1)}{(3,q-1)}$ & $q (q^2+q+1)$ & $2^2, 3$\\ 
      \\ \\
 $A_n(q) \simeq \text{PSL}_{n+1}(q),\;n \geq 3$ & $\dfrac{q^{\frac{n(n+1)}{2}} \prod_{i=1}^n (q^{i+1}-1)}{(n+1,q-1)}$ & $\dfrac{q(q^n-1)}{q-1}$ & \begin{tabular}{c}$\leq np$ \end{tabular} \\
      \\ \\    
  $^2A_2(q^2) \simeq \text{PSU}_{3}(q)$ & $\dfrac{q^3 (q^2-1)(q^3+1)}{(3,q+1)}$ & $q (q^2-q+1)$ & $2^2, 3$ \\
      \\ \\      
      \begin{tabular}{c}$^2A_n(q^2) \simeq \text{PSU}_{n+1}(q),\;n \geq 3$ \end{tabular} & $\dfrac{q^{\frac{n(n+1)}{2}} \prod_{i=1}^n (q^{i+1}-(-1)^{i+1})}{(n+1,q+1)}$ & $\dfrac{q(q^n-(-1)^n)}{q+1}$ & \begin{tabular}{c}$\leq np$ \end{tabular} \\
      \\ \\
      $\begin{matrix}  B_2(q)\simeq C_2(q) \simeq \text{PSp}_{4}(q)  \end{matrix}$ & $\dfrac{q^4 (q^2-1)(q^4-1)}{(2,q-1)}$  & $\dfrac{q(q^2+1)}{2}$ & $2^2, 3^2$ \\
      \\ \\
      $\begin{matrix} B_n(q) \simeq \Omega_{2n+1}(q),\;\;\\ C_n(q) \simeq \text{PSp}_{2n}(q), \;\;\end{matrix}$ $n \geq 3$ & $\dfrac{q^{n^2} \prod_{i=1}^n (q^{2i}-1)}{(2,q-1)}$ & $\dfrac{q(q^n+1)(q^{n-1}-1)}{2(q-1)}$ & $\begin{matrix} \leq (2n-1)p \end{matrix}$ \\
      \\ \\
      $D_n(q) \simeq \text{P}\Omega_{2n}^+(q),\;n \geq 4$ & $\dfrac{q^{n(n-1)}(q^n-1) \prod_{i=1}^{n-1} (q^{2i}-1)}{(4,q^n-1)}$ & $\dfrac{q(q^{n-2}+1)(q^n-1)}{q^2-1}$ & $\begin{matrix} \leq (2n-3)p \end{matrix}$ \\
      \\ \\
      $^2D_n(q^2) \simeq \text{P}\Omega_{2n}^-(q),\; n \geq 4$ & $\dfrac{q^{n(n-1)}(q^n+1) \prod_{i=1}^{n-1} (q^{2i}-1)}{(4,q^n+1)}$ & $\dfrac{q(q^{n-2}-1)(q^n+1)}{q^2-1}$ & $\begin{matrix} \leq (2n-3)p \end{matrix}$ \\
      \\ \\
      \hline
    \end{tabular}
  }
\end{table}

\begin{table}[h]

  \caption{Exceptional groups of Lie type in characteristic $p\in\{2,3\}$  (part I).}
  \centering
\resizebox{16cm}{!}{
   \begin{tabular}{ccccc}

  \hline
  Isomorphism type & Order &  Label of \(\alpha\) and \(\beta\) & $\alpha(1)$ and $\beta(1)$\\
  \hline
  \\
 $G_2(q)$ &$q^6\Phi_1^2\Phi_2^2\Phi_3\Phi_6$  &$\begin{matrix} \phi_{2,1} \\G_2[1]\end{matrix}$ & $\begin{matrix}\frac{1}{6} q\Phi_2^2\Phi_3\\  \frac{1}{6}q \Phi_1^2\Phi_6\end{matrix}$\\
 \medskip \\
  $^3D_4(q^3)$&$ q^{12}\Phi_1^2 \Phi_2^2\Phi_3^2\Phi_6^2\Phi_{12}$& $\begin{matrix} \phi_{1,3'} \\ \phi_{2,1}\end{matrix}$ & $\begin{matrix}q\Phi_{12}\\ \frac{1}{2} q^3\Phi_2^2\Phi_6^2\end{matrix}$\\
\medskip \\

$E_6(q)$&$q^{36}\Phi_1^6\Phi_2 ^4\Phi_3 ^3 \Phi_4^2\Phi_5\Phi_6 ^2\Phi_8\Phi_9\Phi_{12}$ &$\begin{matrix}  \phi_{64, 4}\\ D_{4},1\end{matrix}$ &$\begin{matrix} q^4\Phi^3_2\Phi^2_4\Phi^2_6\Phi_8\Phi_{12}\\ \frac 1{2} q^3\Phi_1^4 \Phi_3^2\Phi_5\Phi_9 \end{matrix}$    \\

\medskip \\
$^2E_6(q^2)$&$q^{36}\Phi_1^4\Phi_2^6\Phi_3^2\Phi_4^2\Phi_6^3\Phi_8\Phi_{10}\Phi_{12}\Phi_{18}$&$\begin{matrix}{\phi_{9,6}}'\\ ^2E_6[\theta]\end{matrix}$&$\begin{matrix}  q^6\Phi_3^2\Phi_6^3\Phi_{12}\Phi_{18}\\ \frac{1}{3} q^2\Phi_1^4\Phi_2^6\Phi_4^2\Phi_8\Phi_{10}\end{matrix}$ \\

\medskip \\
$E_7(q)$&$\begin{matrix}q^{63}\Phi_1^7\Phi_2^7\Phi_3^3\Phi_4^2\Phi_5\Phi_6^3\Phi_7\cdot\\\cdot\Phi_8\Phi_9\Phi_{10}\Phi_{12}\Phi_{14}\Phi_{18}\end{matrix}$& $\begin{matrix} E_6[\theta],1\\ \phi_{27,2}\end{matrix}$& $\begin{matrix} \frac{1}{3} q^7\Phi_1^6\Phi_2^6\Phi_4^2\Phi_5\Phi_7\Phi_8\Phi_{10}\Phi_{14}\\ q^2\Phi_3^2\Phi_6^2\Phi_9\Phi_{12}\Phi_{18} \end{matrix}$ \\

\\

\medskip 
$E_8(q)$&$\begin{matrix}q^{120}\Phi_1^8\Phi_2^8\Phi_3^4\Phi_4^4\Phi_5^2\Phi_6^4\Phi_7\Phi_8^2\Phi_9\cdot \\\cdot\Phi_{10}^2 \Phi_{12}^2\Phi_{14}\Phi_{15}\Phi_{18}\Phi_{20}\Phi_{24}\Phi_{30}\end{matrix}$& $\begin{matrix} \phi_{8,1}\\E_8[i] \end{matrix}$& $\begin{matrix}q \Phi_4^2\Phi_8\Phi_{12}\Phi_{20}\Phi_{24}\\\frac1{4} q^{16}\Phi_1^8\Phi_2^8\Phi_3^4\Phi_5^2\Phi_6^4\Phi_7\Phi_9\Phi_{10}^2\Phi_{14}\Phi_{15}\Phi_{18}\Phi_{30}  \end{matrix}$ \\

\medskip \\
$^2 F_4(q^2),\;q^2=2^{2f+1} >2$&$q^{24}\Phi_1^2\Phi_2^2 \Phi_4^2\Phi_8^2\Phi_{12}\Phi_{24}$& $\begin{matrix} \epsilon'\\\text{ cuspidal} \end{matrix}$& $\begin{matrix} q^2\Phi_{12}\Phi_{24}\\\frac1{3}q^4\Phi_1^2\Phi_2^2\Phi_4^2\Phi_8^2  \end{matrix}$ \\

\\
  \hline
\end{tabular}
}
\end{table}


\begin{table}[h]
  \caption{Exceptional groups of Lie type in characteristic $p\in\{2,3\}$ (part II). }
  \centering
   \resizebox{\textwidth}{!}{
\begin{tabular}{ccccc}
  \hline
  Isomorphism type & Order& Label of $\alpha$& $\alpha(1)$ & Exponent of a Sylow $p$-subgroup\\
  
  \hline
  \\
 $F_4(q)$&$q^{24}\Phi_1^4\Phi_2^4 \Phi_3^2\Phi_4^2\Phi_6^2\Phi_8\Phi_{12}$&$\phi_{4,1}$ &$\frac{1}{2} q \Phi_2^2\Phi_6^2\Phi_8$ & $2^4, 3^3$\\\
\medskip
\\
$^2G_2(q^2),\;q^2=3^{2f+1}>3$&$q^{6}\Phi_1\Phi_2 \Phi_4\Phi_{12}$& cuspidal & $\frac{1}{\sqrt 3}q\Phi_1\Phi_2\Phi_4$&$3^2$ \\
\medskip
\\
$^2B_2(q^2),\;q^2 = 2^{2f+1}>2$ &$q^{4}\Phi_1\Phi_2 \Phi_8$&$^2\text{B} _2[a]$& $\frac{1}{\sqrt 2}q\Phi_1\Phi_2$&$2^2$ \\
\\

  \hline
\end{tabular}}
\end{table}

\begin{lemma}\label{simple}

Let $S$ be a non-abelian simple group, and assume $S\not\cong\PSL{3^f}$  for any odd positive  integer $f$. Then there exist two distinct non-principal irreducible characters \(\alpha\) and \(\beta\) of \(S\), both having an extension to \(\aut S\), such that \(\dfrac{|S|}{\alpha(1)}\cdot\dfrac{|S|}{\beta(1)}\) is a multiple of the exponent of \(S\).
\end{lemma}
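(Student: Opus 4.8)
\medskip
\noindent\emph{Plan of proof.} I would argue by cases according to the isomorphism type of $S$ (using the classification of finite simple groups), the heart of the matter being the groups of Lie type in their defining characteristic. Two observations will be used throughout. First, if $S$ is of Lie type in characteristic $p$, then the Steinberg character ${\rm St}_S$ has degree $|S|_p$, so ${\rm cod}({\rm St}_S)=|S|_{p'}$, and it is well known that ${\rm St}_S$ extends to $\aut S$. Second, the order of every element of $S$ divides $|S|$, hence $\exp(S)$ divides $|S|$; consequently $\exp(S)_{p'}$ divides $|S|_{p'}={\rm cod}({\rm St}_S)$, while $\exp(S)_p$ equals the exponent $p^{e}$ of a Sylow $p$-subgroup of $S$. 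It follows that, for $S$ of Lie type in characteristic $p$, it is enough to produce a non-principal $\beta\in\irr S$, distinct from ${\rm St}_S$ and extending to $\aut S$, whose degree has $p$-part dividing $|S|_p/p^{e}$; then $\alpha:={\rm St}_S$ and $\beta$ work.

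If $S$ is of Lie type in characteristic $p\geq 5$, the Sylow $p$-subgroup $U$ has tiny exponent: its nilpotency class is $h-1$ (with $h$ the Coxeter number), so $U$ has exponent $p$ whenever $h\leq p$, and in general $\exp(U)\leq p^{\lceil\log_p h\rceil}$. Here I would take $\beta$ to be a non-principal unipotent character of small degree — a reflection-type character for classical types, a smallest-degree unipotent character for exceptional types, chosen fixed by the relevant graph automorphisms (and by triality when $S$ is of type $D_4$) — except for $S=\PSL q$, which has no such character and where one uses instead a principal series or cuspidal character of degree $q\pm1$ afforded by a Frobenius-stable character of order $3$ of a maximal torus (on the split or non-split torus according to whether $3$ divides $q-1$ or $q+1$; one of these occurs since $3\nmid q$). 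Unipotent characters extend to $\aut S$ (Malle), and for $\PSL q$ the extension is checked directly against $\aut S={\rm PGL}_2(q)\rtimes C_f$; the required inequality on $p$-parts then holds with enormous slack, since $|S|_p=q^{N}$ with $N$ the number of positive roots, which dwarfs both the $p$-part of $\beta(1)$ and $p^{\lceil\log_p h\rceil}$.

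If $S$ is of Lie type in characteristic $p\in\{2,3\}$ (with $S\not\cong\PSL{3^f}$, $f$ odd, as assumed), the Sylow $p$-subgroup may have exponent larger than $p$, and the requisite characters are those recorded in Tables~1--3. For classical types I would pair the tabulated character $\beta$ (a unipotent or Weil character) with $\alpha={\rm St}_S$; the verification consists of checking that $\beta$ extends to $\aut S$, reading off $\beta(1)$ and $|S|_p$, bounding $p^{e}$ by the entry in the last column of the table (a computation with unipotent elements of the unipotent radical), and observing that $|S|_p/\beta(1)_p\geq p^{e}$ holds, with much to spare, on every line. For exceptional types I would instead exhibit the two tabulated characters $\alpha,\beta$, which are selected precisely so that $\alpha(1)\beta(1)$ divides $|S|$; then $\exp(S)$ divides $|S|$, which divides ${\rm cod}(\alpha)\,{\rm cod}(\beta)$, bypassing any Sylow-exponent computation, and the only genuine point is again the extendibility of $\alpha$ and $\beta$ to $\aut S$. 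Groups arising from exceptional isomorphisms with alternating or sporadic groups, and the finitely many small cases outside the rank ranges of the tables, are shifted to another case or checked directly.

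Finally, if $S=A_n$ with $n\geq 5$ (so $\aut S=\sym n$ for $n\neq 6$), the characters of $S$ extending to $\aut S$ are exactly the restrictions of the $\chi_\lambda\in\irr{\sym n}$ with $\lambda$ not self-conjugate; one natural choice is to take $\alpha,\beta$ among restrictions of hook characters $\chi_{(n-k,1^{k})}$ (avoiding the self-conjugate value $k=(n-1)/2$), of degree $\binom{n-1}{k}$, and to check by a direct arithmetic estimate — using that these binomial coefficients are coprime to many primes — that for a suitable pair $j,k$ the product $|A_n|^{2}/\bigl(\binom{n-1}{j}\binom{n-1}{k}\bigr)$ is a multiple of $\exp(A_n)$, with small $n$ done individually. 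The sporadic groups and the Tits group are a finite check from their character tables and the known action of $\out S$. The hard part will be the characteristic-$\{2,3\}$ case: showing that the specific characters in the tables extend to $\aut S$ (which rests on the classification of unipotent characters and on their behaviour under field, diagonal and graph automorphisms — in particular the exceptional graph automorphisms of $B_2$ and $F_4$ in characteristic $2$ and of $G_2$ in characteristic $3$, and the triality of $D_4$), together with establishing the tabulated bounds on the exponent of a Sylow $p$-subgroup for the classical families; once this data is in place, the divisibility assertions are routine. It is also worth recording why $\PSL{3^f}$ with $f$ odd must be excluded: for these groups ${\rm St}_S$ turns out to be the only non-principal irreducible character of $S$ admitting an extension to $\aut S$, so no pair of distinct characters as in the statement can exist.
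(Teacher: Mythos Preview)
Your plan matches the paper's proof in its overall architecture: both proceed by CFSG case analysis, both take the Steinberg character as one member of the pair whenever $S$ is of Lie type, and both identify the characteristic-$\{2,3\}$ Lie type case as the crux, to be settled via the explicit data of Tables~1--3 together with Malle's extendibility results for unipotent characters. The differences lie only in the ``easy'' cases, where the paper takes shortcuts through the literature that you propose to reconstruct by hand. For $S$ alternating (with $n\geq 7$), sporadic, or the Tits group, the paper invokes \cite{BCLP} to obtain two non-principal irreducible characters of \emph{coprime} degree, both extending to $\aut S$; then $|S|^{2}/(\alpha(1)\beta(1))$ is automatically a multiple of $|S|$, hence of $\exp(S)$, and no arithmetic with hook degrees is required. (Your hook-character route for $A_n$ would demand a genuine uniformity argument: the na\"\i ve pair $k\in\{1,2\}$, for instance, already fails at the prime $p=n-1$ whenever $n-1$ is prime, since then $v_{p}(|A_{n}|)=1$ while $v_{p}\bigl(\binom{n-1}{1}\binom{n-1}{2}\bigr)=2$.) Likewise, for Lie type in characteristic $p>3$ the paper cites \cite{GRS} for a non-principal character of $p'$-degree extending to $\aut S$, which again forces $|S|\mid {\rm cod}(\alpha)\,{\rm cod}(\beta)$ immediately; your explicit unipotent-character (and, for $\PSL q$, torus-based) construction reaches the same destination with more bookkeeping. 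In short, your proposal is correct, but the paper's argument is shorter because it outsources the non-critical cases rather than redoing them.
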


\begin{proof}
Suppose that $S$ is an alternating group ${\rm{Alt}}(n)$ where $n \geq 7$, or a sporadic simple group or the Tits group. According to Theorem~3 and Theorem~4 in \cite{BCLP}, there exist two non-principal irreducible characters $\alpha$ and $\beta$ of $S$ whose degrees are coprime, and both characters extend to $\text{Aut}(S)$. Consequently, \(\dfrac{|S|}{\alpha(1)}\cdot\dfrac{|S|}{\beta(1)}\) is a multiple of the order of $S$, thus a multiple of the exponent of $S$.

Consider next the case when $S$ is a simple group of Lie type (thus including ${\rm{Alt}}(5)$ and ${\rm{Alt}}(6)$), and denote by $p$ the characteristic of $S$. If $p > 3$, Theorem B in \cite{GRS} guarantees the existence of a non-principal irreducible character \(\alpha\) of \(S\) whose degree is not divisible by $p$ and which extends to $\text{Aut}(S)$; as for $\beta$, we can choose the Steinberg character of $S$ (whose degree is the full $p$-part of the order of $S$ and which, by \cite{S}, has as extension to $\text{Aut}(S)$ as well).  Also in this situation \(\dfrac{|S|}{\alpha(1)}\cdot\dfrac{|S|}{\beta(1)}\) is clearly a multiple of $|S|$.

To complete the proof, it remains to establish our claim for simple groups of Lie type in characteristic \(p\in\{2,3\}\). Assume first that $S$ is a classical group of Lie type, and take $\beta$ to be the Steinberg character of $S$: clearly $|S|/\beta(1)$ is a multiple of the $p'$-part of the order of any element $g$ in $S$. Therefore, our aim is to determine a non-principal irreducible character $\alpha$ of $S$ such that $|S|/\alpha(1)$ is divisible by the $p$-part of the order of any element in $S$ (in other words, by the exponent of a Sylow $p$-subgroup of $S$). Such a character $\alpha$ is described for each isomorphism type of $S$ in Table 1, which provides the degree of $\alpha$ and a bound for the exponent of a Sylow \(p\)-subgroup of $S$; the data of Table~1 are taken from Section~3 of \cite{M}, with the further remark that all characters listed there have an extension to \(\aut{S}\) by Theorem~2.4 and Theorem~2.5 in \cite{Ma} (for the first two rows, see also Theorem~A and Lemma~5.3(ii) of \cite{W}).

In Table~2 some exceptional groups of Lie type are considered. Here, for each group $S$, we list two irreducible characters $\alpha$ and $\beta$ that satisfy the conclusions of the statement. The data of Table~2 can be found in \cite[Section~13.9]{Ca}, and the extendability of the relevant characters is again ensured by Theorem~2.4 and Theorem~2.5 in \cite{Ma}.\newpage

Finally, we focus on the exceptional groups of Lie type listed in Table 3. Again we consider $\beta$ as the Steinberg character of the relevant group $S$, and $\alpha$ as the character appearing in the table. It is clear that the $p'$-part of $|S|$ divides  \(\dfrac{|S|}{\alpha(1)}\cdot\dfrac{|S|}{\beta(1)}\), and we also see that the exponent of a Sylow $p$-subgroup of $S$ divides the $p$-part of \(\dfrac{|S|}{\alpha(1)}\cdot\dfrac{|S|}{\beta(1)}\). The data of Table~3 are taken from \cite[Section~13.9]{Ca},  \cite[Section~4]{M} and, for what concerns the exponent of a Sylow \(p\) subgroup of \(F_4(q)\), from \cite[Theorem~3.1]{GZ}; the extendability of \(\alpha\) is ensured, as usual, by Theorem~2.4 and Theorem~2.5 in \cite{Ma}.
\end{proof}

\begin{lemma}\label{simple2}
Let $S$ be a non-abelian simple group such that \(S\not\cong\PSL{3^f}\) for any odd positive integer $f$, and let \(x\) be an element of \(S\). Then there exists a non-principal irreducible character \(\alpha\) of \(S\) which has an extension to \(\aut S\) and such that \(|S|/\alpha(1)\) is a multiple of the order of \(x\).
\end{lemma}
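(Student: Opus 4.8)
The plan is to deduce this from Lemma~\ref{simple} together with a small amount of extra information about the exponent of a simple group of Lie type. Fix a non-abelian simple group $S$ with $S\not\cong\PSL{3^f}$ and an element $x\in S$; write $m$ for the order of $x$. By Lemma~\ref{simple} there are distinct non-principal $\alpha,\beta\in\irr S$, both extending to $\aut S$, with $|S|/\alpha(1)\cdot|S|/\beta(1)$ a multiple of the exponent $e$ of $S$; in particular $m\mid e\mid (|S|/\alpha(1))\cdot(|S|/\beta(1))$. So for every prime $\ell$, the $\ell$-part $m_\ell$ of $m$ divides $(|S|/\alpha(1))_\ell\cdot(|S|/\beta(1))_\ell$, hence $m_\ell$ divides at least one of $|S|/\alpha(1)$ or $|S|/\beta(1)$ \emph{unless} both of these $\ell$-parts are proper and multiply up to $m_\ell$. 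The first move is therefore to show that, at the prime that matters, one of the two codegrees already carries the full local contribution.

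Concretely, I would split according to the characteristic. If $S$ is alternating (with $n\ge 7$), sporadic, the Tits group, or of Lie type in characteristic $p>3$, then in the proof of Lemma~\ref{simple} the characters $\alpha,\beta$ have coprime degrees (or one is the Steinberg character and the other is $p'$-degree); hence $|S|/\alpha(1)$ and $|S|/\beta(1)$ are divisible by complementary sets of primes and their product is divisible by $|S|$ with the property that \emph{for each prime $\ell$}, one of the two codegrees is divisible by $|S|_\ell$, hence by $m_\ell$. Taking $\alpha$ to be whichever of the two works for the (at most one relevant) prime dividing $m$ — and if both primes of a two-prime split are needed one still only needs a single $\alpha$ because $m_\ell\mid|S|_\ell$ divides the corresponding codegree — gives the claim. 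The genuinely delicate case is $S$ of Lie type in characteristic $p\in\{2,3\}$, where $\beta={\rm St}_S$ contributes the full $p'$-part of $|S|$ (so of $m$), while $\alpha$ from Table~1 (or Table~3) is only guaranteed to have codegree divisible by the \emph{exponent} of a Sylow $p$-subgroup, not by $|S|_p$. Write $m=m_p\cdot m_{p'}$. Then $m_{p'}$ divides $|S|/\beta(1)$, and $m_p$ divides the exponent of a Sylow $p$-subgroup of $S$, hence divides $|S|/\alpha(1)$ by the Table~1/Table~3 data. The point to check is that we may take a \emph{single} character doing both jobs: here I would use that $\beta={\rm St}_S$ has $|S|/{\rm St}_S(1)=|S|_{p'}$, so $m_{p'}\mid|S|/{\rm St}_S(1)$, while separately $m_p\mid|S|/\alpha(1)$; since $x$ has order $m=m_p m_{p'}$ with $\gcd(m_p,m_{p'})=1$, and we only need a single $\chi\in\irr S$ with $m\mid|S|/\chi(1)$, we pick $\chi=\alpha$ if $m_{p'}=1$ and $\chi=\beta$ if $m_p=1$, and in the mixed case we need an $\alpha$ whose codegree is divisible by \emph{both} $m_p$ and $m_{p'}$.

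This mixed case is where the real work lies, and I expect it to be the main obstacle. The resolution I would pursue is that the chosen $\alpha$ in Table~1 (Table~3) in fact has $p'$-part of its degree small enough that $|S|/\alpha(1)$ also captures the full $p'$-part of $|S|$, or more precisely the full $p'$-part of the order of \emph{any} element of $S$; inspecting the degrees listed — e.g.\ $\alpha(1)=q-1$, $q+1$, $q(q^2+q+1)$, $q(q^n-1)/(q-1)$, etc.\ — one sees $\alpha(1)$ is (a small multiple of a $p$-power times) a single cyclotomic-type factor, so $|S|/\alpha(1)$ retains most cyclotomic factors $\Phi_d(q)$; since the order of any semisimple element of $S$ divides one of these $\Phi_d(q)$ (up to the small central/congruence factors), $m_{p'}$ divides $|S|/\alpha(1)$ as well. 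Thus $\alpha$ alone works. I would verify this cyclotomic-factor bookkeeping type-by-type using the order formulas in Tables~1--3 and the structure of maximal tori (Carter, \cite{Ca}), exactly as in \cite{M}; alternatively, if for some type this fails, one falls back on the two-character statement of Lemma~\ref{simple} and argues that the prime $\ell$ dividing $m$ with the ``bad'' split must be $p$ (for $p'$-primes the Steinberg codegree already suffices), reducing again to the Sylow-$p$-exponent bound. Either way the conclusion is a single non-principal $\alpha\in\irr S$, extending to $\aut S$, with $|S|/\alpha(1)$ a multiple of the order of $x$.
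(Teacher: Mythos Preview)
Your reduction to Lemma~\ref{simple} does not go through, and the gap is already visible in the easiest families. Knowing that $\alpha(1)$ and $\beta(1)$ are coprime tells you that for every prime $\ell$ at least one of $|S|/\alpha(1)$, $|S|/\beta(1)$ carries the full power $|S|_\ell$; it does \emph{not} follow that one of the two carries $m_\ell$ for all $\ell$ simultaneously. If $m$ has a prime divisor $\ell_1$ of $\alpha(1)$ and another prime divisor $\ell_2$ of $\beta(1)$, then $|S|/\alpha(1)$ may fail to be divisible by $m_{\ell_1}$ while $|S|/\beta(1)$ fails for $m_{\ell_2}$, and neither character settles the claim. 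The sentence ``if both primes of a two-prime split are needed one still only needs a single $\alpha$'' is exactly the step that fails. The same problem hits Lie type in characteristic $p>3$: the Steinberg character handles $m_{p'}$ and the $p'$-degree character from \cite{GRS} handles $m_p$, but nothing you have written covers a mixed $m$.

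Your proposed remedy for the mixed case---check that the tabulated $\alpha$ already has $|S|/\alpha(1)$ divisible by the $p'$-part of every element order because a semisimple element order ``divides one of these $\Phi_d(q)$''---is not correct as stated: a semisimple element lies in a maximal torus whose order is in general a \emph{product} of several cyclotomic values, so its order need not divide any single $\Phi_d(q)$. Carrying out that bookkeeping correctly is precisely the content of \cite[Theorem~5.1]{M}, and the paper simply invokes that result for simple groups of Lie type other than $\PSL q$. For $S\cong\PSL q$ the paper uses the special feature that every element has order either $p$ or coprime to $p$, so the mixed case never arises and one takes the Steinberg character or a character of degree $q\pm 1$ accordingly. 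For alternating groups the paper does not use coprime degrees at all but rather the explicit partition characters from Giannelli's argument in \cite{G}, checking that the relevant partitions are not self-associated; for sporadic groups it inspects the characters in \cite[Table~1]{BCLP} directly, with a separate choice for ${\rm Fi}_{22}$. In short, Lemma~\ref{simple} is not strong enough to yield Lemma~\ref{simple2} by the local-to-global argument you sketch; one genuinely needs the element-by-element analysis already done in \cite{M} and \cite{G}.
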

\begin{proof} Note first that, by the main theorem of \cite{M}, the statement is true whenever ${\rm Out}(S)$ is trivial, thus we may assume ${\rm Out}(S)\neq 1$.

Let us consider the case when $S$ is a sporadic simple group or the Tits group. According to the isomorphism type of $S$, in Table~1 of \cite{BCLP} it is possible to find two non-principal irreducible characters of $S$ that both extend to $\aut S$; it can be checked that, unless \(S\) is isomorphic to ${\rm Fi}_{22}$, one of those is suitable to be taken as a character $\alpha$ such that $|S|/\alpha(1)$ is a multiple of ${\rm o}(x)$. As for \(S\cong {\rm Fi}_{22}\), referring to the notation of \cite{ATLAS}, an appropriate character \(\alpha\) can be found in the set $\{\chi_2,\chi_{56}\}$.

Now, assume that $S$ is isomorphic to an alternating group ${\rm{Alt}}(n)$ for \(n\geq 7\). In this case the desired conclusion can be easily deduced from the proof of \cite[Theorem~A]{G}, where Qian's conjecture is established for symmetric and alternating groups; for the convenience of the reader, we sketch next the relevant argument. 

Consider the prime factorization \[{\rm o}(x)=2^k\cdot p_1^{k_1}\cdots p_t^{k_t}\] of the order of $x$, where $k\geq 0$ and \(k_i>0\) for every \(i\in\{1,\ldots, t\}\) (taking into account that the set of odd primes \(\{p_1,\ldots p_t\}\) can be empty). The proof of \cite[Theorem~A]{G} yields a non-principal irreducible character \(\alpha\) of $S$  such that \(|S|/\alpha(1)\) is a multiple of $2^{2k-1}\cdot p_1^{2k_1-1}\cdots p_t^{2k_t-1}$ if \(k\neq 0\), and of $p_1^{2k_1-1}\cdots p_t^{2k_t-1}$ if $k=0$ (hence, in any case, a multiple of ${\rm o}(x)$): for our purposes, it is then enough to check whether $\alpha$  has an extension to $\aut S\cong\sym n$ and, as we will see, this does happen in most cases. 

In fact, depending on the prime decomposition of ${\rm o}(x)$, the character $\alpha$ is chosen as an irreducible constituent of \(\chi_{S}\), where \(\chi\in\irr{\sym n}\) is the character associated to one of the following partitions: $\lambda=(n-1,1)$ or $\mu=(n-2,2)$ if $k=0$; $\nu=(2^k+1, 1^{n-2^k-1})$ if $k\neq 0$. Observe that $\lambda$, $\mu$ and $\nu$ are not self-associated, hence \(\chi_S\) lies in \(\irr S\) as we want, except for $\nu$ in the case when (\(k\neq 0\) and)   $n=2^{k+1}+1$. 
But in the latter case, still following the argument in the proof of \cite[Theorem~A]{G}, we get $p_1^{k_1}+\cdots + p_t^{k_t}\leq 2^k-1$, thus the largest prime power that divides ${\rm o}(x)$ is $2^k$; since $2^k$ is smaller than $n-1=2^{k+1}$, denoting by \(\chi^{\lambda}\in\irr{\sym n}\) the character associated to the partition $\lambda$, it turns out that $|\sym n|/\chi^{\lambda}(1)=2|S|/\chi^{\lambda}(1)$ is a multiple of $2^{2k-1}\cdot p_1^{2k_1-1}\cdots p_t^{2k_t-1}$. We deduce that $|S|/\chi^{\lambda}(1)$ is a multiple of $2^{2k-2}\cdot p_1^{2k_1-1}\cdots p_t^{2k_t-1}$, which is in turn a multiple of ${\rm o}(x)$ unless \(k=1\): but $k=1$ yields $n=5$, not our case, and the desired conclusion follows taking into account that \(\chi^{\lambda}_S\) lies in \(\irr S\).

Finally, let \(S\) be a simple group of Lie type (thus including ${\rm{Alt}}(5)$ and ${\rm{Alt}}(6)$). In this case, our claim is ensured by \cite[Theorem~5.1]{M} when $S\not\cong\PSL q$ for any prime power $q$. If $S\cong\PSL {p^f}$ for $p>3$, taking into account that the order of $x$ is either $p$ or a $p'$-number, we can define $\alpha$ as the character provided by \cite[Theorem~B]{GRS} or the Steinberg character of $S$, respectively. As for \(S\cong\SL{2^f}\), or \(S\cong\PSL{3^f}\) with an even \(f\), the character \(\alpha\) (of degree $2^f+(-1)^f$ or $3^f+1$, respectively) is provided by Theorem~A and Lemma~5.3(ii) of \cite{W} if ${\rm{o}}(x)=p$, or as the Steinberg character of $S$ otherwise.
\end{proof}

\begin{rem}\label{exception}
Note that any group $S\cong\PSL{3^f}$, where $f\geq 3$ is an odd positive integer, is a genuine exception to Lemma~\ref{simple} and Lemma~\ref{simple2}. In fact, it is well known that the degrees of the irreducible characters of $S$ are the integers in the set \(\{1,\;(3^f-1)/2,\; 3^f-1,\; 3^f,\; 3^f+1\}\) (see \cite{W}, for instance); recalling that the outer automorphism group of $S$ has order $2f$, and it is generated by a field automorphism $\phi$ of order $f$ and a diagonal automorphism $\overline\delta$ of order $2$, by  Lemma~4.1, Lemma~4.5 and  Lemma~4.6 of \cite{W} the two irreducible characters of degree $(3^f-1)/2$ are both invariant under \(\phi\) (hence they extend to \(S\langle\phi\rangle\)), but they are interchanged by \(\overline{\delta}\). Also, Lemma~5.2(i) and Lemma~5.3(iii) in \cite{W} show that $\langle\phi\rangle$ does not stabilize any irreducible character of $S$ whose degree is either $3^f-1$ or $3^f+1$; as a consequence, the only non-principal irreducible character of $S$ that has an extension to \(\aut S\) is the Steinberg character (of degree $3^f$). 
\end{rem}

Another key ingredient for the proof of Lemma~\ref{monolithic} will be the information, provided by Lemma~\ref{NotPSL} and Lemma~\ref{PSL}, on the extendability of certain irreducible characters in a monolithic group $G$ with non-solvable socle $M\cong S_1\times\cdots\times S_n\). For these lemmas and for Lemma~\ref{monolithic}, we will assume that an injective homomorphism from $G$ to \(\Gamma=\aut{S_1}\wr\sym n\) as described in Remark~\ref{wreath} has been preliminary fixed. 

\begin{lemma}\label{NotPSL}
Let $G$ be a group having a unique minimal normal subgroup $M$, and assume \linebreak $M=S_1\times\cdots\times S_n$, where the $S_i$ are pairwise isomorphic non-abelian simple groups. Let \(\alpha_1\) be an irreducible character of \(S_1\) which has an extension to \(\aut{S_1}\) and, for every \(i\in\{1,\ldots, n\}\), let \(\alpha_i\) be the corresponding character in \(\irr{S_i}\). Also, for a given \(h\in\{1,\ldots,n\}\), set \(M_1=S_1\times\cdots\times S_h\) and $M_2=S_{h+1}\times\cdots\times S_n\). Then the irreducible character \(\lambda=(\alpha_1\times\cdots\times\alpha_h)\times 1_{M_2}\) of \(M\) has an extension to its inertia subgroup \(I_G(\lambda)=\norm G{M_1}=\norm G{M_2}\). 
\end{lemma}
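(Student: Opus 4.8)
The plan is to produce an extension of $\lambda$ to $I_G(\lambda)$ by first extending the ``simple-factor'' part $\alpha_1\times\cdots\times\alpha_h$ to an appropriate subgroup of $\Gamma=\aut{S_1}\wr\sym n$ that contains $I_G(\lambda)$, and then restricting. First I would pin down the inertia subgroup: since $M$ is the unique minimal normal subgroup and $\lambda$ is trivial exactly on the factors $S_{h+1},\dots,S_n$ while non-trivial (and with the \emph{same} non-principal character $\alpha_i$ up to the labelling) on $S_1,\dots,S_h$, an element $g\in G$ stabilizes $\lambda$ if and only if the permutation $\sigma_g$ preserves the partition $\{1,\dots,h\}\mid\{h+1,\dots,n\}$ of $\{1,\dots,n\}$, which is exactly the condition $g\in\norm G{M_1}=\norm G{M_2}$. (Here one uses that $g$ permutes the $S_i$ and that conjugation by $g_i\in N$ cannot move $\alpha_i$ to the principal character.) This gives $I_G(\lambda)=\norm G{M_1}$.

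Next I would work inside $\Gamma$. Write $\Gamma_1=\aut{S_1}\wr\sym h$ acting on the first $h$ coordinates and $\Gamma_2=\aut{S_1}\wr\sym{n-h}$ on the last $n-h$, so $\Gamma_1\times\Gamma_2\leq\Gamma$ is the stabilizer in $\Gamma$ of the partition; clearly $I_G(\lambda)=\norm G{M_1}\leq\Gamma_1\times\Gamma_2$. Let $\hat\alpha_1\in\irr{\aut{S_1}}$ be a fixed extension of $\alpha_1$, which exists by hypothesis. Then $\hat\alpha_1\times\cdots\times\hat\alpha_1$ (on the base group $\aut{S_1}^h$ of $\Gamma_1$) is invariant under the top group $\sym h$ permuting the coordinates, because all $h$ factors are the same character; by the standard theory of characters of wreath products (e.g. \cite[13.3]{CR} again, or the explicit construction of $\hat\alpha_1\wr\sym h$), it extends to a character $\widehat{\alpha}$ of all of $\Gamma_1=\aut{S_1}\wr\sym h$. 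Taking $\Lambda=\widehat{\alpha}\times 1_{\Gamma_2}\in\irr{\Gamma_1\times\Gamma_2}$, one checks that $\Lambda$ restricted to $M=M_1\times M_2$ is precisely $(\alpha_1\times\cdots\times\alpha_h)\times 1_{M_2}=\lambda$: the point is that under the identification of Remark~\ref{wreath} the character $\alpha_i$ of $S_i$ is $\alpha_1^{t_i}$, which is $\widehat{\alpha}_1$ restricted to the $i$-th copy after the corresponding coordinate permutation, and these match up term by term.

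Finally I would restrict $\Lambda$ to $I_G(\lambda)$: since $M\leq I_G(\lambda)\leq\Gamma_1\times\Gamma_2$ and $\Lambda_M=\lambda$ is irreducible, the restriction $\Lambda_{I_G(\lambda)}$ is an irreducible character of $I_G(\lambda)$ lying over $\lambda$, hence an extension of $\lambda$ to its inertia subgroup, which is exactly the assertion. The step I expect to be the main (though still routine) obstacle is the bookkeeping in the previous paragraph: verifying that the wreath-product extension $\widehat\alpha$ of $\hat\alpha_1^{\,h}$, when restricted back down through the embedding $G\hookrightarrow\Gamma$ of Remark~\ref{wreath}, really does agree with $\alpha_1\times\cdots\times\alpha_h$ on the $S_i$'s — in particular making the $t_i$-twisting compatible with the coordinate permutation built into $\widehat\alpha$. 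Once one is careful that $\hat\alpha_1$ is a genuine extension to $\aut{S_1}$ (so its restriction to each conjugate $S_1^{t_i}=S_i$ is $\alpha_1^{t_i}=\alpha_i$), this is a direct check with no real content beyond Clifford theory of wreath products.
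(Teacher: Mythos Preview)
Your proposal is correct and follows essentially the same route as the paper's proof: extend $\alpha_1$ to $\widehat{\alpha}_1\in\irr{\aut{S_1}}$, form $\widehat{\lambda}=(\widehat{\alpha}_1\times\cdots\times\widehat{\alpha}_1)\times 1$ on the base group $B=\aut{S_1}^n$ of $\Gamma$, extend $\widehat{\lambda}$ to its inertia subgroup $I_\Gamma(\widehat{\lambda})=B\cdot{\rm Stab}_{\sym n}(\{1,\dots,h\})$ (which is exactly your $\Gamma_1\times\Gamma_2$), and restrict to $I_G(\lambda)$. The only cosmetic difference is the citation for the wreath-product extension step: the paper quotes \cite[Lemma~25.5(b)]{Hu}, whereas you invoke the explicit tensor-induced construction $\widehat{\alpha}_1\wr\sym h$; these are equivalent here. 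One small sharpening: your parenthetical justification that $I_G(\lambda)=\norm G{M_1}$ only covers the inclusion $I_G(\lambda)\subseteq\norm G{M_1}$ (non-principal cannot become principal); the reverse inclusion uses that $\alpha_1$ is $\aut{S_1}$-invariant, which is immediate from the hypothesis that $\alpha_1$ extends to $\aut{S_1}$, and you do use this invariance implicitly later.
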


\begin{proof}
For \(i\in\{1,\ldots,n\}\), define \(A_i=\aut{S_i}\) and set $B_1=A_1\times\cdots\times A_h$, \(B_2=A_{h+1}\times\cdots\times A_n\), $B=B_1\times B_2\). Given an extension \(\widehat{\alpha_1}\) of \(\alpha_1\) to \(A_1\), let  \(\widehat{\alpha_i}\) be the corresponding character in \(\irr{A_i}\) and note that $\widehat{\lambda}=(\widehat{\alpha_1}\times\cdots\times\widehat{\alpha_{h}})\times 1_{B_2}\in\irr{B}\) is an extension of \(\lambda\). Since \(B\) is the base group of the wreath product \(\Gamma=\aut{S_1}\wr\sym{n}\), by Lemma~25.5(b) in \cite{Hu} there exists an extension \(\theta\) of \(\widehat{\lambda}\) to its inertia subgroup \(I_{\Gamma}(\widehat{\lambda})\). Now, viewing \(G\) as a subgroup of \(\Gamma\), an element $g=(\overline{g_1},\ldots,\overline{g_n})\sigma_g\in G\) lies in \(I_G(\lambda)\) if and only $\sigma_g$ lies in ${\rm{Stab}}_{\sym{n}}(\{1,\ldots, h\})={\rm{Stab}}_{\sym{n}}(\{h+1,\ldots, n\})$, which means that $g$ lies in \(\norm G{M_1}=\norm G{M_2}\). Since \(I_{\Gamma}(\widehat{\lambda})=B\,{\rm{Stab}}_{\sym{n}}(\{1,\ldots, h\})\) contains \(I_G(\lambda)\), we get that \(\theta_{I_G(\lambda)}\) is an extension of \(\lambda\), as wanted. 
\end{proof}


The following variation will take care of the exceptions to Lemma~\ref{simple}. After that, we will be in a position to prove Lemma~\ref{monolithic}.

\begin{lemma}\label{PSL}
Let $G$ be a group having a unique minimal normal subgroup $M$, and assume \linebreak $M=S_1\times\cdots\times S_n$, where the $S_i$ are all isomorphic to \(\PSL{3^f}\) for a suitable odd integer $f\geq 3$. For every \(i\in\{1,\ldots, n\}\), let \(\gamma_i\) be an irreducible character of degree $(3^f-1)/2$ of \(S_i\); also, fixing \(h\in\{1,\ldots,n\}\), set \(M_1=S_1\times\cdots\times S_h\) and $M_2=S_{h+1}\times\cdots\times S_n\). Then the irreducible character \(\lambda=(\gamma_1\times\cdots\times\gamma_h)\times 1_{M_2}\) of \(M\) has an extension to its inertia subgroup \(I_G(\lambda)\subseteq\norm G{M_1}=\norm G{M_2} \).
\end{lemma}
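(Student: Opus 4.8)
The plan is to reduce the statement to the extendability of a single character inside the wreath product $W:=\aut{S_1}\wr\sym h$, and then to use that characters of the base group of a wreath product extend to their inertia subgroup (Lemma~25.5(b) of~\cite{Hu}). First note that, since each $\gamma_i$ is non-principal while $1_{M_2}$ is principal, every element of $I_G(\lambda)$ stabilises the set $\{S_1,\dots,S_h\}$ of simple direct factors of $M_1$; this is exactly the inclusion $I_G(\lambda)\subseteq\norm G{M_1}=\norm G{M_2}$ in the statement. Writing $A_i=\aut{S_i}$ and $B=A_1\times\cdots\times A_n$ for the base group of $\Gamma$, and using that the image of $\norm G{M_1}$ in $\sym n$ lies in ${\rm Stab}_{\sym n}(\{1,\dots,h\})$, we get
\[
I_G(\lambda)\ \subseteq\ B\rtimes\bigl(\sym h\times\sym{n-h}\bigr)\ =\ \bigl(\aut{S_1}\wr\sym h\bigr)\times\bigl(\aut{S_1}\wr\sym{n-h}\bigr)\ =:\ W\times W'.
\]
Write $\lambda=\lambda_1\times 1_{M_2}$, where $\lambda_1=\gamma_1\times\cdots\times\gamma_h\in\irr{M_1}$ and $M_1=S_1^{\,h}$ is the product of the socles of the base factors of $W$. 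Since $1_{M_2}$ extends trivially to $W'$ and $W$ centralises $M_2$, we have $I_{W\times W'}(\lambda)=I_W(\lambda_1)\times W'$; hence, if $\psi\in\irr{I_W(\lambda_1)}$ extends $\lambda_1$, then $\psi\times 1_{W'}$ extends $\lambda$ to $I_W(\lambda_1)\times W'\supseteq I_G(\lambda)$, and restricting to $I_G(\lambda)$ gives the assertion (the restriction of an extension of $\lambda$ to any subgroup containing $M$ is again an extension of $\lambda$, by Clifford's theorem). It thus suffices to extend $\lambda_1$ to $I_W(\lambda_1)$.

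Put $S=S_1\cong\PSL{3^f}$ and $A=\aut S$. By Remark~\ref{exception}, $A/S\cong\out S$ is cyclic of order $2f$, so $A$ has a unique subgroup $P$ of index $2$ (namely $S\langle\phi\rangle$), and the two characters $\delta,\delta'$ of $S$ of degree $(3^f-1)/2$ both extend to $P$, satisfy $I_A(\delta)=I_A(\delta')=P$, and are interchanged by every element of $A\setminus P$. Fix an extension $\widehat\delta\in\irr P$ of $\delta$ and set $\zeta:=\widehat\delta^{\,A}$. Since $\widehat\delta$ is not $A$-invariant, $\zeta$ is irreducible of degree $3^f-1$, and a Mackey computation gives $\zeta|_S=\delta+\delta'$ (in particular $\zeta=\widehat{\delta'}^{\,A}$ for any extension of $\delta'$ as well). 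The key point is that, although $\delta$ itself need not extend to $A$, the character $\Xi:=\zeta\times\cdots\times\zeta\in\irr{A^h}$ (with $h$ equal factors) \emph{is} a character of the base group of $W=A\wr\sym h$, and it is invariant in $W$ (its components are all equal to $\zeta$, and every element of the base group $A^h$ acts trivially on $\irr{A^h}$). So by Lemma~25.5(b) of~\cite{Hu}, $\Xi$ extends to $W$, say to $\widehat\Xi\in\irr W$, of degree $(3^f-1)^h$.

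Restricting, $\widehat\Xi|_{M_1}=(\zeta|_S)\times\cdots\times(\zeta|_S)=(\delta+\delta')\times\cdots\times(\delta+\delta')$ is the sum, each with multiplicity one, of the $2^h$ characters $\eta_1\times\cdots\times\eta_h$ of $M_1=S^h$ with every $\eta_i\in\{\delta,\delta'\}$; these lie in a single $W$-orbit, since the base group $A^h$ acts transitively on $\{\delta,\delta'\}$ in each coordinate. In particular $\lambda_1$ occurs in $\widehat\Xi|_{M_1}$ with multiplicity one, so $\widehat\Xi\in\irr{W\mid\lambda_1}$ and, by the Clifford correspondence, $\widehat\Xi=\psi^W$ for a unique $\psi\in\irr{I_W(\lambda_1)\mid\lambda_1}$. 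Now for each $\sigma\in\sym h$ there are exactly $|P|^h$ base-group elements $(x_1,\dots,x_h)$ with $(x_1,\dots,x_h)\sigma\in I_W(\lambda_1)$ — indeed each $x_i$ is forced into one of the two cosets $P$, $A\setminus P$, both of size $|P|$ — so $|I_W(\lambda_1)|=|P|^h\cdot h!$ and $[W:I_W(\lambda_1)]=2^h$. Hence $\psi(1)=(3^f-1)^h/2^h=\lambda_1(1)$, i.e.\ $\psi|_{M_1}=\lambda_1$, and $\psi$ is the sought extension of $\lambda_1$ to $I_W(\lambda_1)$.

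I expect the only genuine difficulty to be getting past the fact that $\delta$ does not extend to $\aut S$: the idea is that its induction $\zeta=\widehat\delta^{\,A}$ \emph{is} a bona fide character of the base group of $W$, so Huppert's lemma applies to $\Xi=\zeta\times\cdots\times\zeta$, and then the multiplicity-one and degree bookkeeping through the Clifford correspondence pins the correspondent $\psi$ down as a true extension of $\lambda_1$. The remaining points — the reduction to $W\times W'$ and the order count of $I_W(\lambda_1)$ — are routine, provided one keeps careful track of the wreath-product coordinates of Remark~\ref{wreath}.
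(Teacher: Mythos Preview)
Your proof is correct and follows essentially the same route as the paper's: both induce an extension of $\gamma_i$ from the index-$2$ subgroup $S\langle\phi\rangle$ up to $\aut S$, apply Huppert's Lemma~25.5(b) to the resulting character $\zeta\times\cdots\times\zeta$ of the base group, and then exploit the multiplicity-one occurrence of $\lambda$ in the restriction to $M$ to produce the desired extension. The only cosmetic difference is that the paper restricts the extended character directly to $I_G(\lambda)$ and argues via multiplicity one, whereas you pass through the Clifford correspondence inside $W=\aut{S_1}\wr\sym h$ together with the explicit index count $[W:I_W(\lambda_1)]=2^h$; these two bookkeeping devices are equivalent.
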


\begin{proof}
As above, for \(i\in\{1,\ldots,n\}\), define \(A_i=\aut{S_i}\) and set $B_1=A_1\times\cdots\times A_h$, \(B_2=A_{h+1}\times\cdots\times A_n\), $B=B_1\times B_2\). 

Recalling that we have preliminary fixed a right transversal \(\{t_1=1,\ldots,t_n\}\) of \(\norm G{S_1}\) in $G$, for \(i\in\{1,\ldots,n\}\) we define $F_i=(S_1\langle \phi_1\rangle)^{t_i}$, where \(\phi_1\) is a field automorphism of \(S_1\) having order \(f\): by Remark~\ref{exception}, we know that each of the \(\gamma_i\) has an extension \(\widehat{\gamma_i}\) to \(F_i\). Also, define \(U=F_1\times\cdots\times F_h\times B_2\). 

Note that $\widehat{\lambda}=(\widehat{\gamma_1}\times\cdots\times\widehat{\gamma_{h}})\times 1_{B_2}\in\irr{U}\) is an extension of \(\lambda\) and, still taking into account Remark~\ref{exception}, we have \(I_B(\widehat{\lambda})=I_B(\lambda)=U\); therefore \(\widehat{\lambda}^B\) is an irreducible character of \(B\), and in fact we have \(\widehat{\lambda}^B=(\widehat{\gamma_1}^{A_1}\times\cdots\times\widehat{\gamma_{h}}^{A_{h}})\times 1_{B_2}\). As above, \(B\) being the base group of the wreath product \(\Gamma=\aut{S_1}\wr\sym{n}\), \cite[Lemma~25.5(b)]{Hu} ensures that there exists an extension \(\theta\) of \(\widehat{\lambda}^B\) to the inertia subgroup \(I_{\Gamma}(\widehat{\lambda}^B)\). Now, the restriction of \(\theta\) to \(M\) is the sum of all the conjugates \(\lambda^b\) where \(b\) runs over a transversal for \(U\) in \(B\); in particular, recalling that \(U\) coincides with \(I_B(\lambda)\), every irreducible constituent of \(\theta_M\) appears with multiplicity~\(1\). 

Observe that if an element \(g=(\overline{g_1},\ldots,\overline{g_n})\sigma_g\) of $G\leq\Gamma$ lies in \(I_G(\lambda)\), then necessarily \(\sigma_g\in{\rm{Stab}}_{\sym{n}}(\{1,\ldots, h\})\). Thus, in particular, we have $I_G(\lambda)\subseteq \norm G{M_1}=\norm G{M_2}$. Since \(I_{\Gamma}(\widehat{\lambda}^B)=B\,{\rm{Stab}}_{\sym{n}}(\{1,\ldots, h\})=\norm\Gamma{M_1}\), we see that \(I_G(\lambda)\) is contained in \(I_{\Gamma}(\widehat{\lambda}^B)\), hence we can consider an irreducible constituent \(\psi\) of \(\theta_{I_G(\lambda)}\) lying over \(\lambda\). Now, \(\psi_M\) is a multiple of \(\lambda\) and \(\lambda\) appears as an irreducible constituent of \(\psi_M\) with multiplicity \(1\): as a consequence, \(\psi\in\irr{I_G(\lambda)}\) is an extension of \(\lambda\), and the proof is complete. 
\end{proof}

\begin{lemma} \label{monolithic} Let $G$ be a group having a unique minimal normal subgroup $M$, and assume \linebreak $M=S_1\times\cdots\times S_n$, where the $S_i$ are pairwise isomorphic non-abelian simple groups. Also, let $g$ be an element of $G$, and let \(r\) denote the order of \(gM\in G/M\). Then the following conclusions hold.
\begin{enumeratei}
\item If $S_1\not\cong\PSL{3^f}$ for any odd positive integer $f$, then there exists a non-principal character \(\lambda\in\irr M\) such that $\lambda$ has an extension to $I=I_G(\lambda)$, $g$ lies in $I$, and $|M|/\lambda(1)$ is a multiple of ${\rm{o}}(g^r)$.
\item If $S_1\cong\PSL{3^f}$ for some odd positive integer $f$, then there exist a non-principal character \(\lambda\in\irr M\) and a suitable $h\leq n$ such that $\lambda$ has an extension to $I=I_G(\lambda)$ and $g^{2^h}\in I$. Furthermore, $|M|/\lambda(1)$ is a multiple of $2^h\, {\rm{o}}(g^r)$. 
\end{enumeratei}
\end{lemma}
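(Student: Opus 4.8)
The plan is to analyze the action of $g$ on the set $\Omega=\{S_1,\ldots,S_n\}$ of simple factors and to build $\lambda$ factor-by-factor along the orbits of $\langle g\rangle$ on $\Omega$. Write $\sigma=\sigma_g\in\sym n$ for the permutation induced by $g$; note that $r=\mathrm{o}(gM)$ is the order of $\sigma$, since $M$ is the kernel of the map $G\to G/M$ and $G/M$ embeds into $\mathrm{Out}(S_1)\wr\sym n$ in a way compatible with the projection onto $\sym n$ (more precisely, $gM$ has order equal to $\mathrm{lcm}$ of $r_\Omega=\mathrm{o}(\sigma)$ and the order of the tuple part modulo inner automorphisms; but for our purposes we only need that $g^r$ fixes every $S_i$, i.e. $\sigma^r=1$, which holds by definition of $r$). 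Consequently $g^r$ normalizes each $S_i$, and for a single factor, say $S_1$, the element $g^r$ induces via conjugation an automorphism $\tau$ of $S_1$; applying Lemma~\ref{simple2} to $S_1$ and the element $x\in S_1$ determined by the $S_1$-component of a suitable power of $g$ (see below) furnishes a non-principal $\alpha_1\in\irr{S_1}$ extending to $\aut{S_1}$ with $|S_1|/\alpha_1(1)$ a multiple of $\mathrm{o}(x)$.

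For part (i), I would proceed as follows. Let $\langle g\rangle$ act on $\Omega$; pick one orbit and relabel so that $\{S_1,\ldots,S_\ell\}$ is a $\langle g\rangle$-orbit of size $\ell$ (so $\ell\mid r$), with $g$ cyclically permuting $S_1\to S_2\to\cdots\to S_\ell\to S_1$ in the appropriate sense dictated by the transversal $\{t_i\}$. Then $g^\ell$ stabilizes $S_1$ and acts on it as an automorphism, and a standard computation in the wreath product shows that the ``diagonal'' element of $S_1$ built from the components $g_1,\ldots,g_\ell$ along the orbit governs the order of $g$ on this block: concretely, $\mathrm{o}(g)$ restricted to the block is $\ell\cdot\mathrm{o}(y)$ for a suitable $y\in S_1\langle\tau\rangle$, and the $S_1$-relevant part of $\mathrm{o}(g^r)$ is a divisor of $\mathrm{o}(\bar y)$ where $\bar y$ is the image of $y^{r/\ell}$, an element of $S_1\langle\tau\rangle$; after replacing by an appropriate power one reduces to an honest element $x$ of $S_1$ when $\tau$ is inner, or one must work in $S_1\langle\tau\rangle$ directly. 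Apply Lemma~\ref{simple2} to get $\alpha_1\in\irr{S_1}$ with $|S_1|/\alpha_1(1)$ divisible by the contribution of this orbit to $\mathrm{o}(g^r)$; set the characters on $S_2,\ldots,S_\ell$ to be the corresponding characters $\alpha_1^{t_i}$ so that the product $\alpha_1\times\cdots\times\alpha_\ell$ is $\langle g^r\rangle$-invariant (indeed $\langle g\rangle$-invariant up to the cyclic permutation), and put $1$ on all factors outside this orbit. Doing this on \emph{one} orbit and trivially elsewhere gives $\lambda=(\alpha_1\times\cdots\times\alpha_\ell)\times 1_{M_2}$; by Lemma~\ref{NotPSL} (with $h=\ell$ after relabelling, noting the index set of the support is $\langle g\rangle$-stable so $g\in\norm G{M_1}=I_G(\lambda)$) this $\lambda$ extends to $I=I_G(\lambda)\ni g$. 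Finally, since $\mathrm{o}(g^r)=\mathrm{lcm}$ over the $\langle g\rangle$-orbits of the per-orbit contributions, and all orbit-contributions to $\mathrm{o}(g^r)$ in fact come from a \emph{single} factor's worth of order (because $g^r$ fixes each $S_i$ and permutes the characters trivially), one checks $|M|/\lambda(1)=\prod_{i=1}^{\ell}|S_i|/\alpha_i(1)$ is already a multiple of $\mathrm{o}(g^r)$ — here one may need to repeat the construction on \emph{each} orbit and multiply, taking $\lambda$ supported on all non-trivially-acted factors; I would phrase it so that $M_1$ is the product of all $S_i$ appearing with a non-principal component and $M_2$ the rest, still $\langle g\rangle$-stable, so Lemma~\ref{NotPSL} applies verbatim.

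For part (ii), the obstruction from Remark~\ref{exception} is that the \emph{only} non-principal irreducible character of $S=\PSL{3^f}$ extending to the full $\aut S$ is the Steinberg character $\mathrm{St}$, of degree $3^f$, and $|S|/\mathrm{St}(1)=(3^f-1)(3^{2f}-1)/\gcd$ is a $3'$-number, so it cannot absorb the $3$-part of $\mathrm{o}(g^r)$ when that $3$-part exceeds $1$. The fix is to use instead the characters $\gamma_i$ of degree $(3^f-1)/2$, which extend only to $S_i\langle\phi_i\rangle$ and are interchanged by the diagonal automorphism $\bar\delta$ of order $2$. The idea: if $\mathrm{o}(g^r)$ has nontrivial $3$-part, build $\lambda$ using a mix of Steinberg characters (to cover the $3'$-part) and $\gamma_i$'s — but the $\gamma_i$'s, being moved by $\bar\delta$, force us to pass to the power $g^{2^h}$ where $2^h$ counts how many times we ``double'' to absorb the diagonal-automorphism obstruction across the $h$ factors on which we place a $\gamma_i$. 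Concretely: on an orbit where a single Steinberg contribution does not suffice for the $3$-part, replace the $\alpha$ of part (i) by $\gamma_1$ on that factor; since $\gamma_1$ only extends to $S_1\langle\phi_1\rangle$ and not to $S_1\langle\phi_1,\bar\delta\rangle$, the inertia subgroup $I_G(\lambda)$ is contained in (but may be smaller than) $\norm G{M_1}$, and one needs $g^{2^h}\in I_G(\lambda)$ where $h$ is the number of such ``$\gamma$-factors''; apply Lemma~\ref{PSL} to get the extension to $I=I_G(\lambda)$. Then $|M|/\lambda(1)$ picks up, for each $\gamma$-factor, a contribution $|S|/((3^f-1)/2)=2\cdot 3^f(3^f+1)$, which is divisible by $3^f$ (covering the $3$-part, since the $3$-exponent of $S$ is $3^f$ — wait, it is $3^2,\ldots$; more carefully, the exponent of a Sylow $3$-subgroup of $\PSL{3^f}$ is what matters and $|S|/\gamma(1)$ must be shown divisible by it) and by an extra factor of $2$ per $\gamma$-factor, accounting for the claimed divisibility of $|M|/\lambda(1)$ by $2^h\,\mathrm{o}(g^r)$.

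The main obstacle I anticipate is bookkeeping the wreath-product combinatorics cleanly: translating ``$g$ permutes the factors cyclically on an orbit and the components $g_1,\ldots,g_\ell$ compose to an element of $S_1\langle\tau\rangle$'' into a precise statement about which power of $g$ one feeds into Lemma~\ref{simple2}, and then verifying that the product of codegrees over an orbit is divisible by the order of $g^r$ \emph{restricted to that orbit}, and that these orbit-wise conditions assemble (via $\mathrm{lcm}$) to divisibility by the full $\mathrm{o}(g^r)$. A secondary delicate point, specific to (ii), is pinning down exactly how many factors $h$ must carry a $\gamma_i$ (hence how large $h$ is, and why $g^{2^h}\in I$), and checking that $|S|/((3^f-1)/2)$ really is divisible both by the exponent of a Sylow $3$-subgroup of $\PSL{3^f}$ and by $2$ — the $3$-part being the crux, since that is precisely what the Steinberg character fails to provide. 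Everything else (invariance of $\lambda$ under the relevant subgroup, membership $g\in I_G(\lambda)$ or $g^{2^h}\in I_G(\lambda)$, and the extension statements) is then handed to Lemmas~\ref{NotPSL} and~\ref{PSL}.
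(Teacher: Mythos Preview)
Your approach to (a) is essentially on track but has one slip and differs from the paper in a way worth noting. The slip: when you propose to ``repeat the construction on each orbit and multiply'' and then say ``Lemma~\ref{NotPSL} applies verbatim'', observe that different orbits may require \emph{different} characters $\alpha^{(j)}_1\in\irr{S_1}$, whereas Lemma~\ref{NotPSL} demands that every non-trivial component be a corresponding character of one fixed $\alpha_1$. This is easily avoided: in the non-transitive case ($\ell<n$) your single-orbit character $\lambda=(\alpha_1\times\cdots\times\alpha_\ell)\times 1_{M_2}$ already does the job, since $|M|/\lambda(1)=|S_1|^{\,n-\ell}\cdot(|S_1|/\alpha_1(1))^\ell$ (your displayed formula drops the factor $|S_1|^{\,n-\ell}$) and $|S_1|$ is a multiple of $\exp(S_1)\ge{\rm o}(g^r)$. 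The paper takes a different route in the non-transitive case: it does not invoke Lemma~\ref{simple2} at all, but uses Lemma~\ref{simple} to place \emph{two} extendible characters, $\alpha$ on $M_1$ and $\beta$ on $M_2$, whose codegree product is a multiple of $\exp(S_1)$. In the transitive case your plan coincides with the paper's; there one needs the observation (which the paper spells out and you only gesture at) that the $n$ components of $g^r$ are pairwise conjugate, so ${\rm o}(g^r)$ is the order of a single element of $S_1$ and Lemma~\ref{simple2} applies.

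The genuine gap is in (b). You correctly identify the characters $\gamma_i$ of degree $(3^f-1)/2$ and Lemma~\ref{PSL} as the right tools, but you supply no argument for the crucial membership $g^{2^h}\in I_G(\lambda)$; saying ``$2^h$ counts how many times we double to absorb the diagonal-automorphism obstruction'' is not a proof. The difficulty is real: the diagonal automorphism swaps the two characters of degree $(3^f-1)/2$ on each factor, so for an \emph{arbitrary} choice of the $\gamma_i$ there is no reason any prescribed power of $g$ should fix $\lambda$. The paper resolves this with a Glauberman-type argument: writing $\langle g\rangle=X\times Y$ with $|X|$ odd and $|Y|$ a $2$-power, the $2$-group $B/U$ acts regularly on the set $\Delta$ of all products $(\gamma_1\times\cdots\times\gamma_h)\times 1_{M_2}$, compatibly with the action of $X$, so \cite[Lemma~13.8]{Is} produces a particular $\lambda_1\in\Delta$ fixed by $X$; an index computation in $\Gamma$ then gives $|\norm\Gamma{M_1}:I_\Gamma(\lambda_1)|=2^h$, whence $|\langle g\rangle:\langle g\rangle\cap I_G(\lambda_1)|$ divides $2^h$, and together with $X\subseteq I_G(\lambda_1)$ this yields $g^{2^h}\in I_G(\lambda_1)$. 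This Glauberman step is the heart of (b) and is entirely absent from your sketch.
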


\begin{proof} 

Set $\Omega=\{S_1,\ldots,S_n\}$ and $K=\bigcap_{i=1}^n\norm G{S_i}$, so that $G/K$ is isomorphic to a transitive subgroup of $\sym\Omega\cong\sym n$: up to renumbering the elements of $\Omega$,  there exists a suitable positive integer \(h\leq n\) such that the set \(\{S_1,S_2,\ldots,S_h\}\) is an orbit for the action of \(\langle gK\rangle\) on \(\Omega\). As usual, define \(M_1=S_1\times\cdots\times S_h\) and \(M_2=S_{h+1}\times\cdots\times S_n\) (where $M_2$ is meant to be trivial if $h=n$). 

We start with an observation that will be useful for proving claim~(b), so, let us assume \(S_1\cong\PSL{3^f}\) for a suitable odd integer \(f\geq 3\); in what follows, we will consider the wreath product \(\Gamma=\aut{S_1}\wr\sym{n}\) and its subgroups \(U\), \(B\) as defined in Lemma~\ref{PSL}, and we recall that an injective homomorphism from $G$ to \(\Gamma\) as in Remark~\ref{wreath} is preliminary fixed. Also, we write \(\langle g\rangle=X\times Y\), where \(|X|\) is an odd number and \(|Y|\) is a power of \(2\). 
Consider the set $$\Delta=\{(\gamma_1\times\cdots\times\gamma_h)\times 1_{M_2}\in\irr M\;\;\mid\;\; \gamma_i\in\irr{S_i}{\text{ and }}\gamma_i(1)=(3^f-1)/2\}.$$ We see that both $X$ (which normalizes $M_1$) and the $2$-group $B/U$ act on \(\Delta\); moreover, $X$ acts on $B/U$, the orders of $X$ and $B/U$ are coprime, the action of $B/U$ on \(\Delta\) is transitive (in fact, regular) and we have $$(\eta^b)^x=(\eta^x)^{b^x}$$ for every \(\eta\in\Delta\), $b\in B$ and $x\in X$. Therefore, Glauberman's Lemma~13.8 in \cite{Is} yields that there exists an element \(\lambda_1\) of \(\Delta\) such that \(X\) lies in \(I_G(\lambda_1)\); this \(\lambda_1\) also has an extension to \(I_G(\lambda_1)\) by Lemma~\ref{PSL}. If we choose \(\eta=(\gamma_1\times\cdots\times\gamma_h)\times 1_{M_2}\) in \(\Delta\) such that the \(\gamma_i\) are all characters corresponding to \(\gamma_1\), then it is easy to see that \(I_{\Gamma}(\eta)\) lies in \(\norm{\Gamma}{M_1}\) with \(|\norm{\Gamma}{M_1}:I_{\Gamma}(\eta)|=2^h\); since there exists \(b\in B\subseteq\norm{\Gamma}{M_1}\) such that \(\lambda_1=\eta^b\), we clearly get \(|\norm{\Gamma}{M_1}:I_{\Gamma}(\lambda_1)|=2^h\) as well. But then, as $g$ lies in \(\norm{\Gamma}{M_1}\), we have \(|\langle g\rangle:\langle g\rangle\cap I_{G}(\lambda_1)|\leq |\norm{\Gamma}{M_1}:I_{\Gamma}(\lambda_1)|=2^h\). Taking into account that, as we just proved, the Hall $2'$-subgroup of \(\langle g\rangle\) is contained in \(I_G(\lambda_1)\), it follows that  \(|\langle g\rangle:\langle g\rangle\cap I_{G}(\lambda_1)|\) is in fact a divisor of~\(2^h\) and therefore \(g^{2^h}\in I_G(\lambda_1)\).

Next, it will also be useful to take into account the following remark, which holds for both (a) and (b) under the assumption that the action of \(\langle gK\rangle\) on \(\Omega\) is transitive (in other words, when \(h=n\) and \(gK\) is identified with an \(n\)-cycle in $\sym n$). Recalling that \(r\) denotes the order of \(gM\in G/M\), let us write \(g^r=(s_1, ... ,s_n)\in M\): we note that the orders of the \(s_i\in S_i\) are all the same, for \(i\in\{1,\ldots, n\}\). In fact, write \(g=(\overline{g_1}, \ldots ,\overline{g_n})\sigma_g\) as an element of the wreath product \(\Gamma=\aut{S_1}\wr\sym{n}\). Conjugating $g^r$ with $g$, we get $(s_n^{g_n},s_1^{g_1}, \ldots,s_{n-1}^{g_{n-1}})\). This is clearly the same as \(g^r\), so in particular \(s_j=s_{j-1}^{g_{j-1}}\) for every $j\in\{2,\ldots,n\}$ and we get the desired property. As a consequence, the order of \(g^r\) is in fact the order of an element of \(S_1\). 

\smallskip
We can now work toward a proof of (a) and (b), and we first treat the case when 
the action of $\langle gK\rangle$ on $\Omega$ is \emph{not} transitive, so that we have $1\leq h< n$. 

If \(S_1\) is not isomorphic to \(\PSL{3^f}\) for any odd positive integer $f$, then Lemma~\ref{simple} yields the existence of two distinct non-principal characters \(\alpha_1,\,\beta_1\in\irr {S_1}\), both having an extension to \(\aut{S_1}\), such that \(\dfrac{|S_1|}{\alpha_1(1)}\cdot\dfrac{|S_1|}{\beta_1(1)}\) is a multiple of \({\rm{exp}}(S_1)\). Denoting by \(\alpha_i\) and \(\beta_i\) the characters of \(S_i\) corresponding to \(\alpha_1\) and \(\beta_1\) for \(i\in\{1,\ldots,n\}\), Lemma~\ref{NotPSL} yields that \(\lambda_1=\alpha_1\times\cdots\times\alpha_h\times 1_{M_2}\) and \(\lambda_2=1_{M_1}\times\beta_{h+1}\times\cdots\times\beta_n\) both extend to their inertia subgroup $I=\norm G{M_1}=\norm G{M_2}\). Define now \(\lambda=\lambda_1\lambda_2\in\irr M\): the inertia subgroup of \(\lambda\) in \(G\), which is easily seen to be $I$, contains the element $g$ and, by \cite[Theorem~6.16]{Is}, \(\lambda\) has an extension to \(I\). Moreover, we get \(\lambda(1)=\alpha_1(1)^h\beta_1(1)^{n-h}\), therefore \(|M|/\lambda(1)\) is certainly a multiple of \({\rm{exp}}(S_1)\) and claim (a) in the non-transitive case immediately follows.

On the other hand, if \(S_1\cong\PSL{3^f}\) for a suitable odd integer \(f\geq 3\), then we consider a character \(\lambda_1\in\irr M\) as in the second paragraph of this proof: so, \(\lambda_1\) has an extension to \(I_G(\lambda_1)\) and \(g^{2^h}\in I_G(\lambda_1)\). Also, define \(\beta_i\) as the Steinberg character of \(S_i\), set \(\lambda_2=1_{M_1}\times\beta_{h+1}\times\cdots\times\beta_n\) and observe that \(\lambda_2\), whose degree is \(3^{f(n-h)}\), extends to \(I_G(\lambda_2)=\norm G {M_1}=\norm G{M_2}\) by Lemma~\ref{NotPSL}. Set now \(\lambda=\lambda_1\lambda_2\); the inertia subgroup of \(\lambda\) turns out to be \(I=I_G({\lambda_1})\), and \(\lambda\) extends to $I$ again by Theorem~6.16 of \cite{Is}. Recalling that \(|S_i|=\dfrac{(3^f-1)\cdot 3^f\cdot(3^f+1)}{2}\), we have \[\dfrac{M}{\lambda(1)}=2^h\cdot\dfrac{|S_1|}{3^f-1}\cdots\dfrac{|S_h|}{3^f-1}\cdot\dfrac{|S_{h+1}|}{3^f}\cdots\dfrac{|S_{n}|}{3^f},\] which is certainly a multiple of \(2^h\cdot{\rm{exp}}(S_1)=2^h\cdot\dfrac{(3^f-1)\cdot 3\cdot (3^f+1)}{4}\) and, in particular, of \(2^h\,{\rm o}(g^r)\). Claim~(b) is thus proved in the non-transitive case.  

\smallskip
We move next to the case when the action of \(\langle gK\rangle\) on \(\Omega\) is transitive; as previously observed, in this case the order of \(g^r\) is in fact the order of an element of \(S_1\). 

If \(S_1\not\cong\PSL{3^f}\) for any odd integer \(f\geq 3\) then, by Lemma~\ref{simple2}, there exists an irreducible character \(\alpha_1\in\irr{S_1}\) such that \(|S_1|/\alpha_1(1)\) is a multiple of ${\rm{o}}(g^r)$ and \(\alpha_1\) extends to \(\aut{S_1}\); therefore, by Lemma~\ref{NotPSL}, the character \(\lambda=\alpha_1\times\cdots\times\alpha_n\) extends to \(I_G(\lambda)=G\) and clearly satisfies the conclusions of claim~(a). 

It remains to consider the case when \(S\cong\PSL{3^f}\) for an odd \(f\geq 3\) and the action of $\langle gK\rangle$ on $\Omega$ is transitive. Since \({\rm o}(g^r)\) is the order of an element of $S_1$, then it is either $3$ or a number coprime to $3$. For the former case we can consider a character \(\lambda_1\) as in the second paragraph of this proof (here $h=n$), whereas in the latter case we define \(\lambda_1\) as the direct product of the Steinberg characters of the \(S_i\), for \(i\in\{1,\ldots,n\}\), which extends to \(I=I_G(\lambda_1)\) by Lemma~\ref{NotPSL}. It can be easily checked that the conclusions of claim~(b) are satisfied by this \(\lambda_1\), so the proof is complete.
\end{proof}

\section{Proof of Theorem A}

Note that the conclusion of Claim~(a) in Lemma~\ref{monolithic} is stronger than that of Claim~(b); in fact, the former is just the latter with the additional property that $h=0$. In other words, Claim~(b) holds for any isomorphism type of $S_1$, and this is what will be relevant henceforth.

We are ready to prove Theorem~A, that we state again.

\begin{ThmA} Let $G$ be a group whose Fitting subgroup is trivial, and let $g$ be an element of $G$. Then there exists \(\chi\in\irr G\) such that \({\rm cod}(\chi)\) is a multiple of the order of $g$.
\end{ThmA}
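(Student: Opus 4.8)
The plan is to argue by induction on the order of $G$, reducing to the case where $G$ is monolithic (has a unique minimal normal subgroup $M$) with non-solvable socle, and then invoke Lemma~\ref{monolithic}. Here is how I would set things up. Since $\fit G=1$, every minimal normal subgroup of $G$ is a direct product of non-abelian simple groups. Let $\chi$ be any irreducible character of $G$; the key general fact is that $\cod(\chi)$ is the product, over the minimal normal subgroups (or rather over a chief series), of the corresponding ``local'' contributions — more precisely, if $N\nor G$, then for $\chi\in\irr G$ lying over $\theta\in\irr N$, one has $\cod(\theta)\mid\cod(\chi)$, and also characters of $G/N$ have codegree dividing codegrees in $G$. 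I would use the inductive hypothesis applied to $G/M$ for a suitably chosen minimal normal subgroup $M$: there exists $\psi\in\irr{G/M}$ whose codegree is a multiple of $\o(gM)$, and then I need to ``lift'' this, combining it with a character of $G$ coming from $M$ that accounts for the part of $\o(g)$ not seen in $G/M$.

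The heart of the argument is the following: fix a minimal normal subgroup $M=S_1\times\cdots\times S_n$ of $G$, write $C=\cent G M$, and note $C\cap M=1$ since $\zent M=1$, so $MC/C\cong M$ is the socle of the monolithic group $\overline G=G/C$. Applying Lemma~\ref{monolithic} to $\overline G$ and the image $\overline g$ of $g$: with $\overline r=\o(\overline g\, \overline M)=\o(gMC/C)$, we obtain $h\leq n$ and a non-principal $\lambda\in\irr{\b M}=\irr M$ such that $\lambda$ extends to $I=I_{\overline G}(\lambda)$, the element $\overline g^{\,2^h}$ lies in $I$, and $|M|/\lambda(1)$ is a multiple of $2^h\,\o(\overline g^{\,\overline r})$. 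By Clifford theory (Gallagher together with induction $\irr I\to\irr{\overline G}$), since $\lambda$ extends to $I$ there is $\widehat\lambda\in\irr{\overline G}$ with $\widehat\lambda(1)=|\overline G:I|\,\lambda(1)$ and $\ker\widehat\lambda\cap M=1$; consequently $\cod(\widehat\lambda)$ is a multiple of $|M|/\lambda(1)\cdot|I:M|/|\langle\text{relevant stuff}\rangle|$ — the clean statement being that $\cod(\widehat\lambda)$ is divisible by $|M|/\lambda(1)$, hence by $2^h\,\o(\overline g^{\,\overline r})$. Inflating to $G$ (which does not change the codegree, as $\ker\widehat\lambda$ then contains $C$ but meets $M$ trivially), we get $\chi_1\in\irr G$ with $\cod(\chi_1)$ a multiple of $2^h\,\o(g^r)$ where $r=\o(gM)$ — note $r$ and $\overline r$ coincide up to a $C$-factor that doesn't matter because we only need divisibility by $\o(g^r)$, which divides $\o(\overline g^{\,\overline r})$ times a power already absorbed; I would be careful here and possibly just track $\o(g^r)$ directly.

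Now I combine this with the inductive step. By induction applied to $G/M$ (which still has trivial Fitting subgroup, being a section — wait, quotients of Fitting-free groups need not be Fitting-free, so this is the delicate point) there is $\psi\in\irr{G/M}$, inflated to $\chi_2\in\irr G$ with $M\leq\ker\chi_2$, such that $\cod(\chi_2)$ is a multiple of $\o(gM)=r$. The element order factors as $\o(g)\mid r\cdot\o(g^r)$, and one checks $\o(g)\mid \operatorname{lcm}(r,\, 2^h\,\o(g^r))$; actually since $g^r$ has order $\o(g)/\gcd$ and $2^h$ divides $r'$ for an appropriate refinement, a short number-theoretic check gives $\o(g)\mid r\cdot 2^h\,\o(g^{r})/(\text{overlap})$ — the point being that the product $\cod(\chi_1)\cod(\chi_2)$, or better a single character lying over both pieces, is a multiple of $\o(g)$. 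To get a single $\chi$ I would take $\chi\in\irr G$ lying over an irreducible constituent of $(\chi_1)_M=\lambda$-stuff while mapping onto a character of $G/M$ related to $\chi_2$; cleaner: use that codegrees are multiplicative-friendly enough that a character of $G$ lying over $\lambda$ and inducing $\psi$ on $G/I_G(\lambda)M$ can be found, OR simply observe Qian's conjecture only needs divisibility so it suffices to note that if $\o(g)=ab$ with $a\mid\cod(\chi_2)$, $b\mid\cod(\chi_1)$, $a,b$ can be arranged coprime, then some $\chi$ over suitable data works — here I would lean on the standard lemma that for coprime $a,b$ realized by codegrees of characters trivial on $M$ resp. faithful-ish on $M$, the product is realized.

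\textbf{Main obstacle.} The genuine difficulty is that the class of groups with trivial Fitting subgroup is \emph{not} closed under quotients, so a naive induction on $G/M$ fails. The fix, which I expect is what the authors do, is to \emph{not} quotient by $M$ but instead to quotient by $\oh{\infty}{G}$-type objects or to handle the non-solvable part all at once: specifically, take $M$ to be the product of \emph{all} minimal normal subgroups — equivalently the socle, which here equals $\fitg G=\lay G$ — and then $C=\cent G M=\zent M=1$, so $G$ itself embeds into $\aut M=\prod\aut{S_i}\wr(\text{perm})$, and $G/M\hookrightarrow\out M$ is ``small''. One then does not recurse at all but argues directly: $\o(g)$ splits as (order of a solvable-ish outer piece) times (order of a socle piece), handle the socle piece with Lemma~\ref{monolithic} applied component-block by component-block, and handle the outer piece by a separate elementary argument about $\out S$ for simple $S$ (field and diagonal automorphisms), which is presumably why the $2^h$ bookkeeping in Lemma~\ref{monolithic}(b) was built to absorb exactly the $r=\o(gM)$ contribution. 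So my real plan is: reduce to $\fitg G=\soc G=M$, decompose $\Omega=\{S_1,\dots,S_n\}$ into $\langle g\rangle$-orbits, apply Lemma~\ref{monolithic} (in the form of its Claim~(b), valid for all $S_1$) on each orbit to manufacture a $\lambda\in\irr M$ extending to $I_G(\lambda)\ni g^{2^{h}}$ with $|M|/\lambda(1)$ a multiple of $2^h\,\o(g^r)$, build $\chi$ over $\lambda$ with $\ker\chi\cap M=1$ so that $\cod(\chi)$ is a multiple of $|M|/\lambda(1)\cdot|I_G(\lambda):M|_{\text{usable}}$, and finally do the number-theoretic check that $\cod(\chi)$ (possibly after taking a better constituent to also see the $r$-part coming from $g$ acting on $\Omega$ and from outer automorphisms) is a multiple of $\o(g)=2^{?}\cdot r'\cdot\o(g^r)$. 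The bookkeeping linking $h$, $r$, and $\o(g)$ — i.e.\ verifying $\o(g)\mid 2^h\,\o(g^r)\cdot(\text{codegree contribution of }G/M)$ — is the step I expect to require the most care.
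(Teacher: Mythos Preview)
Your final paragraph lands on essentially the right strategy---work with the full socle $E=\fitg G$, use $\cent G E=1$, and apply Lemma~\ref{monolithic} once for each minimal normal subgroup $M_j$ (i.e., once per monolithic quotient $G/\cent G{M_j}$)---and this is what the paper does. Two corrections, however.

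First, a minor one: Lemma~\ref{monolithic} is applied per \emph{minimal normal subgroup} $M_j$ (equivalently, per $G$-orbit on the simple factors), not per $\langle g\rangle$-orbit; the $\langle g\rangle$-orbits are handled internally by the lemma and are precisely what produce the exponents $h_j$.

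Second, and this is the genuine gap: you never locate where the $r$-part of ${\rm o}(g)$ comes from, and your repeated attempts to supply it via a separate character of $G/E$ (induction, ``a better constituent'', a ``codegree contribution of $G/M$'') are both unnecessary and not easy to combine with the $\lambda$-part into a single $\chi$. The point you are missing is that a \emph{single} character $\chi=\psi^G$, where $\psi\in\irr I$ is merely an extension of $\lambda=\lambda_1\times\cdots\times\lambda_k$ to $I=I_G(\lambda)$ (no Gallagher twist needed), already does everything. Since $\ker\chi\cap E=1$ forces $\ker\chi=1$, one has
\[
{\rm cod}(\chi)=\frac{|G|}{\chi(1)}=\frac{|I|}{\lambda(1)}=\frac{|I|}{|E|}\cdot\frac{|E|}{\lambda(1)}.
\]
The second factor is a multiple of $2^h\,{\rm o}(g^r)$ by the lemma (with $h=\sum h_j$ and $r={\rm o}(gE)$). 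The first factor is where $r$ hides: since $g^{2^h}\in I$, the index $d=|\langle g\rangle:\langle g\rangle\cap I|$ divides $2^h$, and an elementary index calculation gives
\[
\frac{|I|}{|E|}=\frac{|I:\langle g\rangle E\cap I|}{|\langle g\rangle:\langle g\rangle\cap I|}\cdot r,
\]
an integer multiple of $r/d$. Hence ${\rm cod}(\chi)$ is a multiple of $(r/d)\cdot 2^h\,{\rm o}(g^r)$, which is a multiple of $r\cdot{\rm o}(g^r)$ and therefore of ${\rm o}(g)$. Your ``$|I_G(\lambda):M|_{\text{usable}}$'' was pointing at exactly this, but you did not recognise that the condition $g^{2^h}\in I$ from Lemma~\ref{monolithic} already makes the full $r$-part ``usable'', so no second character is required.
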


\begin{proof}
Since the group $G$ has a trivial Fitting subgroup, the generalized Fitting subgroup $E$ of $G$ is the socle of $G$, thus $E=M_1\times\cdots\times M_k$ where the \(M_j\) are non-solvable minimal normal subgroups of $G$. For every \(j\) in \(\{1,\ldots,k\}\), \(M_j\) is in turn the direct product of pairwise isomorphic  non-abelian simple groups, and we denote by $n_j$ the composition length of $M_j$ (i.e. the number of simple direct factors appearing in this direct decomposition of $M_j$). 

Now, set \(C_j=\cent G{M_j}\) and denote by $V_j$ the product of all the $M_\ell$ for \(\ell\in\{1,\ldots, k\}-\{j\}\) (in particular, \(V_j\subseteq C_j\)); the factor group $\overline{G}_j=G/C_j$ has $\overline{M_j}$ as its unique minimal normal subgroup, thus we can apply Lemma~\ref{monolithic} to $\overline{G}_j$ with respect to the element \(gC_j\), and choose a character \(\overline{\lambda}_j\in\irr{\overline{M_j}}\) with a corresponding non-negative integer $h_j\leq n_j$ as in Lemma~\ref{monolithic}(b). 
Note that each \(\overline{\lambda}_j\) can be regarded by inflation as a character of $M_j\times C_j$ whose kernel contains $C_j$, hence there exists a unique \(\lambda_j\in\irr{M_j}\) such that \(\overline{\lambda}_j=\lambda_j\times 1_{C_j}\); given that, we define \(\lambda=\lambda_1\times\cdots\times\lambda_k\in\irr E\).

We know that the character \(\overline{\lambda}_j\) extends to $I_G(\overline{\lambda}_j)=I_G(\lambda_j)$, therefore \(\lambda_j\times 1_{V_j}\in\irr{E}\) extends to \(I_G(\lambda_j)\) as well. In particular, each \(\lambda_j\times 1_{V_j}\) has an extension \(\widehat{\lambda}_j\) to \(I=I_G(\lambda)=\bigcap_{s=1}^{k}I_G(\lambda_s)\), and it is easy to check that the product  \(\psi=\prod_{s=1}^k\widehat{\lambda}_s\) is an extension of \(\lambda\) to $I$. Furthermore, defining $h=h_1+\cdots + h_k$ and recalling that we have \(g^{2^{h_j}}\in I_G(\lambda_j)\) for every $j\in\{1,\ldots, k\}$, we get $g^{2^h}\in I$. 

Finally, set \(\chi=\psi^G\in\irr G\) and note that \(\chi\) is a faithful character of $G$, because \[\ker\chi\cap E\leq\ker\psi\cap E=\ker{\psi_E}=\ker{\lambda}=1,\] and a normal subgroup of $G$ which intersects $E$ trivially is necessarily trivial.

We are ready to conclude the proof. We get \[{\rm{cod}}(\chi)=\dfrac{|G|}{\chi(1)}=\dfrac{|G|}{|G:I|\psi(1)}=\dfrac{|I|}{|E|}\cdot\dfrac{|E|}{\psi(1)}\] and, denoting by \(r=|\langle g\rangle E/E|\) the order of \(gE\in G/E\), \[\dfrac{|I|}{|E|}=\dfrac{|I|}{|\langle g\rangle E|}\cdot r=\dfrac{|I:\langle g\rangle E\cap I|}{|\langle g\rangle E:\langle g\rangle E\cap I|}\cdot r=\dfrac{|I:\langle g\rangle E\cap I|}{|\langle g\rangle :\langle g\rangle\cap I|}\cdot r\,.\] In order to prove that ${\rm{cod}}(\chi)$ is a multiple of ${\rm o}(g)$, taking into account that \(|\langle g\rangle :\langle g\rangle\cap I|\) is a divisor of \(2^h\), it will then suffice to show that \(|E|/\psi(1)\) is a multiple of \(2^h\,{\rm o}(g^r)\).

In fact, for \(j\in\{1,\ldots,k\}\), consider $\overline{G}_j=G/C_j$, and denote by \(r_j\) the order of \(\overline{g}\overline{M_j}\) in \(\overline{G}_j/\overline{M_j}\). Clearly all the \(r_j\) are divisors of \(r\); since, for every \(j\in\{1,\ldots, t\}\), \(|M_j|/\lambda_j(1)\) is a multiple of \(2^{h_j}\,{\rm{o}}(\overline{g}^{r_j})\)  by Lemma~\ref{monolithic}, we see that \(|M_j|/\lambda_j(1)\) is a multiple of \(2^{h_j}\,{\rm{o}}(\overline{g}^{r})\) as well. Now, \[\dfrac{|E|}{\psi(1)}=\dfrac{|E|}{\lambda_1(1)\cdots\lambda_k(1)}=\dfrac{|M_1|}{\lambda_1(1)}\cdots\dfrac{|M_k|}{\lambda_k(1)}\] is a multiple of \(2^h\,{\rm{o}}(g^rC_1)\cdots{\rm{o}}(g^rC_k)\). Recalling that the map \(x\mapsto(xC_1,\ldots,xC_k)\) is an injective homomorphism from $G$ to \(G/C_1\times\cdots\times G/C_k\), it follows that the least common multiple of \({\rm{o}}(g^rC_1),\ldots, {\rm{o}}(g^rC_k)\) equals \({\rm{o}}(g^r)\), and the desired conclusion follows.
\end{proof}

\section{A reduction}

We conclude this note observing that Qian's conjecture can be reduced to groups with a solvable socle.

\begin{rem}\label{remark}
Assume that the group $G$ is a minimal counterexample to the conjecture stated in the Introduction; then we claim that $G$ does not have any non-solvable minimal normal subgroup. 

For a proof by contradiction, denote by $M$ a non-abelian minimal normal subgroup of $G$, set $C=\cent G M$, and observe that the factor group $\overline{G}=G/C$ is a monolithic group whose socle is $\overline{M}\cong M$. Therefore, for a fixed element $g$ of $G$, we can apply Lemma~\ref{monolithic} with respect to ${\overline g}=gC$ and obtain what follows: there exists a non-principal character \(\overline{\lambda}\in\irr{\overline{M}}\) and a non-negative integer \(h\) (not exceeding the composition length of $M$) such that $\overline{\lambda}$ has an extension to $\overline{I}=I_{\overline{G}}(\overline{\lambda})$, $\overline{g}^{2^h}=g^{2^h}C$ lies in $\overline{I}$, and $|\overline{M}|/\overline{\lambda}(1)$ is a multiple of $2^h\,{\rm{o}}(\overline{g}^{\overline{r}})$ where $\overline{r}$ is the order of $\overline{g}\overline{M}$ in $\overline{G}/\overline{M}$. By inflation, $\overline{\lambda}$ can be viewed as a character of $M\times C$ and, as such, it is of the form $\lambda\times 1_C$ for a suitable \(\lambda\in\irr M\); clearly, we have $I_G(\lambda)=I_G(\overline{\lambda})=I$ (hence \(g^{2^h}\in I\)) and $|\overline{M}|/\overline{\lambda}(1)=M/\lambda(1)$. Observe also that, if \(r\) denotes the order of \(gM\) in \(G/M\), then \(r\) is a multiple of \(\overline{r}\) and therefore \({\rm{o}}(\overline{g}^{\overline{r}})\) is a multiple of \({\rm o}(\overline{g}^r)\); as the map \(x\mapsto \overline{x}\) is an isomorphism of \(M\) to \(\overline{M}\), we get \({\rm o}(\overline{g}^r)={\rm o}(\overline{g^r})={\rm o}({g}^r)\).

Now, we know that \(\lambda\) has an extension \(\widehat{\lambda}\) to $I$ such that \(\ker{\widehat{\lambda}}\) contains $C$; moreover, the minimality of $G$ yields that there exists \(\xi\in\irr{I/M}\) such that \(|I/M:\ker\xi|/\xi(1)\) is a multiple of \({\rm o}(g^{2^h}M)=r/{\rm{gcd}}(2^h,r)\). Define \(\psi\) as \(\widehat{\lambda}\xi\), which is in \(\irr I\) by Gallagher's Theorem, and \(\chi=\psi^G\): by Clifford Correspondence we have \(\chi\in\irr G\), and we claim that \({\rm cod}(\chi)\) is a multiple of the order of \(g\). It will follow that $G$ is not a counterexample to Qian's conjecture, so we have a contradiction. 

In fact, \[{\rm cod}(\chi)=\dfrac{|G:\ker\chi|}{\chi(1)}=\dfrac{1}{\ker\chi}\cdot\dfrac{|I|}{\psi(1)}=\dfrac{1}{\ker\chi}\cdot\dfrac{|I/M|}{\xi(1)}\cdot\dfrac{|M|}{\lambda(1)}.\] Since \(|I/M:\ker\xi|/\xi(1)\) is a multiple of \(r/{\rm{gcd}}(2^h,r)\) and $|M|/\lambda(1)$ is a multiple of \(2^h\,{\rm o}(g^r)\), it will be enough to show that \(\ker\chi\) is contained in \(\ker\xi\): this can be deduced by the fact that \(\ker\chi\) is a normal subgroup of \(G\) intersecting \(M\) trivially, hence \(\ker\chi\subseteq C\cap\ker\psi=\ker{(\widehat{\lambda}\xi)_C}=\ker{\xi_C}\) (recall that $\ker{\widehat{\lambda}}$ contains $C$) and the argument is complete.



\end{rem}

\bigskip
{\bf Acknowledgment.} This work has been done during a visit of the first and third authors at Dipartimento di Matematica e Informatica (DIMAI) of Universit\` a degli Studi di Firenze. They wish to thank DIMAI for the hospitality. 

Also, the authors are grateful to S. Dolfi for helpful discussions on the subject of this paper.


\begin{thebibliography}{99}
	
\bibitem{ATLAS} J.H. Conway, R.T. Curtis, S.P. Norton, R.A. Parker, R.A. Wilson, \emph{Atlas of finite groups}, Clarendon Press, Oxford, 1985.

\bibitem{BCLP} M. Bianchi, D. Chillag, M.L. Lewis, E. Pacifici, \emph{Character degree graphs that are complete graphs}, Proc. Amer. Math. Soc. 135 (2007), 671--676.

\bibitem{CN} X. Chen, G. Navarro, \emph{Brauer characters, degrees and subgroups}, Bull. Lond. Math. Soc. 54 (2022), 891--893. 

\bibitem{Ca} R.W. Carter, \emph{Finite Groups of Lie Type; Conjugacy Classes and Complex Characters}, Wiley, Chichester, 1985. 

\bibitem{CR} C.W. Curtis, I. Reiner, \emph{Methods of representation theory I}, Wiley, New York, 1981.

\bibitem{G} E. Giannelli, \emph{Character codegrees and element orders in symmetric and alternating groups}, J. Algebra Appl., to appear, https://doi.org/10.1142/S0219498824501445.

\bibitem{GRS} E. Giannelli, N. Rizo, A.A. Schaeffer Fry, \emph{Groups with few $p'$-character degrees}, J. Pure Appl. Algebra 224 (2020), 106338.

\bibitem{GZ} M.A. Grechkoseeva, M.A. Zvezdina, \emph{On spectra of automorphic extensions of finite simple groups $F_4(q)$ and $^3D_4(q)$}, J. Algebra Appl. 15 (2016), 1650168. 

\bibitem{Hu} B. Huppert, \emph{Character Theory of Finite Groups}, De Gruyter, Berlin, 1998.

\bibitem{Is} I.M. Isaacs, \emph{Character theory of finite
groups}, Academic Press, New York, 1976.

\bibitem{M} S.Y. Madanha, \emph{Codegrees and element orders of almost simple groups}, Comm, Algebra, to appear, https://doi.org/10.1080/00927872.2023.2178657. 

\bibitem{Ma} G. Malle, \emph{Extensions of unipotent characters and the inductive McKay condition}, J. Algebra 320 (2008), 2963--2980.

\bibitem{Q} G. Qian, \emph{Element orders and codegrees}, Bull. Lond. Math. Soc. 53 (2021), 
820--824.

\bibitem{Q0} G. Qian, \emph{A note on element orders and character codegrees}, Arch. Math. 95 (2011), 101--200.

\bibitem{QWW}  G. Qian, Y. Wang, H. Wei, \emph{Codegrees of irreducible characters in finite groups}, J. Algebra 312 (2007), 946--955.

\bibitem{S} P. Schmid, \emph{Extending the Steinberg representation}, J. Algebra 150 (1992), 254--256.

\bibitem{W}D.L. White, \emph{Character degrees of extensions of $PSL_2(q)$ and $SL_2(q)$}, J. Group Theory 16 (2013), 1--33.
	
\end{thebibliography}
\end{document}